\newtheoremstyle{theorem}
  {15pt}          
  {15pt}  
  {\sl}  
  {\parindent}
  {\sc}  
  {. }   
  { }    
  {}     
\theoremstyle{theorem}
\newtheorem{theorem}{Theorem}[section]
\newtheoremstyle{defi}
  {15pt}          
  {15pt}  
  {\rm}  
  {\parindent}     
  {\sc}  
  {. }    
  { }    
  {}     
\theoremstyle{defi}
 \title[EXTENSION OF MIKHLIN MULTIPLIER THEOREM \dots]
       {EXTENSION OF MIKHLIN MULTIPLIER THEOREM \\ [3pt] TO FRACTIONAL DERIVATIVES AND  \\ [3pt] STABLE PROCESSES}
 \author[\normalsize D. Karl\i]{\normalsize Deniz Karl\i } 
 \newtheorem{lem}[theorem]{Lemma}
 \renewcommand{\P}{\mathbb{P}}
 \newcommand{\E}{\mathbb{E}}
 \newcommand{\Rd}{\mathbb{R}^d}
 \newcommand{\R}{\mathbb{R}}
 \newcommand{\F}{\mathcal{F}}
 \newcommand{\Lgen}{\mathcal{L}}
 \newcommand{\barr}{\begin{array}{rcl}}
 	\newcommand{\earr}{\end{array}}
 \newcommand{\disp}{\displaystyle}
 \newcommand{\ind}{1{\hskip -2.5 pt}\hbox{\textnormal I}}
 \DeclareMathOperator{\sign}{sign}
\begin{document}

 \vbox to 2.5cm { \vfill }


 \bigskip \medskip

 \begin{abstract}

In this paper, we prove a new generalized Mikhlin multiplier theorem whose conditions are given with respect to fractional derivatives in integral forms with two different integration intervals. We also discuss the connection between fractional derivatives and stable processes and prove a version of Mikhlin theorem under a condition given in terms of the infinitesimal generator of symmetric stable process. The classical Mikhlin theorem is shown to be a corollary of this new generalized version in this paper.

 \medskip

{\it MSC 2010\/}: Primary 60J45;
Secondary  42A61, 60G52, 26A33

 \smallskip

{\it Key Words and Phrases}: fractional derivatives, generator form, Mikhlin, multiplier, stable process, bounded operator, stochastic process

 \end{abstract}

 \maketitle

 \vspace*{-16pt}



\section{Introduction and preliminaries}\label{sec:1} 

\setcounter{section}{1}
\setcounter{equation}{0}\setcounter{theorem}{0}

Fractional calculus is a rapidly growing area of Mathematics which lies between probability, differential equations and mathematical physics. It provides tools to analyze anomalous particle diffusion models which differ from the classical diffusions. Classical diffusions can be modeled as limits of random walks which correspond to Brownian motion.

In recent years, there has been a growing interest in general Levy processes which contain the most well-known process Brownian motion as a special case. Among Levy processes, stable processes have a special place due to their pleasing properties. Although stable processes do not share many nice properties of Brownian motion, they are the next best type of processes to consider. Hence it is a good starting point if we want to learn more about Levy processes and their applications. We will discuss a specific type of stable process in Section \ref{stochastic_section}. And we refer to \cite{applebaum}, \cite{bass_sing_int}, \cite{kolok} and \cite{ross} for an intensive study of the general theory.

Brownian motion, which is the main focus of classical analysis, has a close relation to classical (non-fractional) calculus. The well-known generator of Brownian motion is given $\frac{1}{2}\Delta$ where $\Delta$ is the Laplacian operator.

Moreover, it is known that the transition density, $p(t,x)$, solves the diffusion equation
\begin{align*}
\frac{\partial p}{\partial t} = D\, \frac{\partial^2 p}{\partial x^2}
\end{align*}
where the Fourier transform of the density is $$\F(p(t,\cdot))(x)=e^{-\frac{1}{2}t\sigma^2x^2}$$ with $D=\sigma^2/2.$

Following this diffusion equation, another equation
\begin{align*}
\frac{\partial p}{\partial t} = D\, \frac{\partial^\alpha p}{\partial x^\alpha}
\end{align*}
attracts attention where $ \frac{\partial^\alpha }{\partial x^\alpha}$ is the derivative of fractional order. Especially, if $\alpha \in (0,2)$ this differential equation has some connection to (one- and two-sided) stable processes. In particular, it is related to symmetric (rotationally invariant) stable process whose infinitesimal generator is
\begin{align*}
\Lgen[f](x)=\frac{\alpha(\alpha-1)}{\Gamma(2-\alpha)}\int_{\Rd-\{0\}} (f(y+x)-f(x) -\ind_{\{|y|<1\}} y\cdot \nabla f ) \frac{dy}{|y|^{d+\alpha}}
\end{align*}
for $f\in Dom(\Lgen)$. (See \cite[P.162]{applebaum}.) If $\alpha>1$ then the third term of the integrand is needed for the convergence of the integral. However if $\alpha<1$, as in this paper, then this term cancels out due to the fact that its integral equals zero. Hence the infinitesimal generator becomes
\begin{align}\label{gen_of_stable}
\Lgen[f](x)=\frac{\alpha}{\Gamma(1-\alpha)}\int_{\Rd-\{0\}} (f(y+x)-f(x)) \frac{dy}{|y|^{d+\alpha}}
\end{align}
which will be very much alike to the definition the fractional derivative below. For more details on this relation we refer to \cite[Chapter 3]{frac_calc}. Throughout this paper, the parameter $\alpha$ is restricted to the case $\alpha \in (0,1)$ unless stated otherwise and $\Gamma(\cdot)$ above represents the usual gamma function.

One of the corner stones of the classical operator theory is the famous Mikhlin multiplier theorem. A multiplier $m$ is a function which is the Fourier transform of the kernel of a convolution operator. That is, if $T_m$ is an operator with kernel $\kappa$ so that $T_mf=f*\kappa$, then $\F(\kappa)=m$. Multiplier theorems allow one to study convolution operators of type $T_m$ through their multipliers $m$ which provide useful simplifications. Mikhlin's theorem provides a control on the convolution operator if the corresponding multiplier obeys some growth condition. The most general version of this theorem is stated in dimensions $d\geq 1$. Since our focus in this paper is dimension one, we state the argument in $d=1$ only. (See \cite[Theorem 5.2.7]{grafakos} for $d\geq 1$.) For a study from a purely analytic point, one may see \cite{grafakos}, \cite{stein1}, \cite{stein2} and \cite{stein3}, and for a study from a probabilistic point, we refer \cite{karli_1} and \cite{karli_2}.

\begin{theorem}[Mikhlin]\label{classic_mikhlin}
	Suppose $d=1$, $c>0$ and $m$ is a bounded differentiable function on $\R-\{0\}$ so that $|m'(x)|\leq c/|x|$, $x\not= 0$. Then the convolution operator $T_mf=f*\kappa$, where $m$ is the Fourier transform of $\kappa$, can be extended from a bounded operator on $L^p\cap L^2$ to a bounded operator on $L^p(\R)$ for $p\in (1,\infty)$. 	
\end{theorem}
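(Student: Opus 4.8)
The plan is to prove the classical Mikhlin multiplier theorem by the standard Calder\'on--Zygmund route, showing that the kernel $\kappa$ whose Fourier transform is $m$ satisfies the H\"ormander condition, which then yields weak-type $(1,1)$ bounds and, via interpolation with the trivial $L^2$ bound, strong $L^p$ bounds for $p\in(1,2]$; the range $p\in(2,\infty)$ follows by a duality argument. First I would record that since $m$ is bounded, Plancherel's theorem immediately gives that $T_m$ is bounded on $L^2$ with operator norm at most $\|m\|_\infty$, so the content of the theorem is really the passage from $L^2$ to $L^p$. The heart of the matter is the singular-integral estimate on the kernel.

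The key step is to verify the H\"ormander regularity condition on $\kappa$, namely
\begin{align*}
\int_{|x|>2|y|} |\kappa(x-y)-\kappa(x)|\,dx \leq C
\end{align*}
uniformly in $y\neq 0$. To get at $\kappa$ from $m$, I would decompose $m$ dyadically: choose a smooth partition of unity subordinate to the annuli $\{2^{j}\leq |x|\leq 2^{j+1}\}$ and write $m=\sum_j m_j$, so that $\kappa=\sum_j \kappa_j$ with $\kappa_j=\F^{-1}(m_j)$. The hypotheses $|m(x)|\leq C$ and $|m'(x)|\leq c/|x|$ translate, after rescaling each piece $m_j$ to unit scale, into uniform bounds on $m_j$ and $m_j'$ in $L^2$ or $L^1$ with constants independent of $j$. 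Integrating by parts (or equivalently using that multiplication by $x$ on the Fourier side corresponds to differentiation on the physical side) then produces the pointwise decay estimates $|\kappa(x)|\lesssim 1/|x|$ and $|\kappa'(x)|\lesssim 1/|x|^2$ away from the origin, which are exactly the Calder\'on--Zygmund kernel estimates. The gradient bound, combined with the mean value theorem, gives $|\kappa(x-y)-\kappa(x)|\lesssim |y|/|x|^2$ on the region $|x|>2|y|$, and integrating this over that region yields the desired uniform bound $C$.

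Once the H\"ormander condition is in hand, I would invoke the standard Calder\'on--Zygmund theory: perform a Calder\'on--Zygmund decomposition of an arbitrary $f\in L^1\cap L^2$ at height $\lambda$, splitting $f$ into a good part $g$ (bounded, controlled in $L^2$) and a bad part $b$ supported on a disjoint union of cubes with mean zero on each. The good part is handled by the $L^2$ bound and Chebyshev's inequality, while the bad part is controlled precisely by the H\"ormander condition, giving the weak-type $(1,1)$ estimate. Marcinkiewicz interpolation between weak-$(1,1)$ and strong-$(2,2)$ then delivers strong $L^p$ boundedness for $1<p\leq 2$, and for $2<p<\infty$ I would pass to the adjoint, whose multiplier is $\overline{m(-\cdot)}$ and satisfies the same hypotheses, so that duality gives the remaining range. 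The main obstacle I anticipate is the dyadic decomposition step and the careful bookkeeping needed to extract the pointwise kernel estimates on $\kappa$ from the two hypotheses on $m$ with constants that are genuinely uniform across scales; the weak-type and interpolation machinery, while technical, is entirely standard once the kernel estimates are established.
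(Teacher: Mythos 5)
Your route --- Calder\'on--Zygmund theory via the H\"ormander condition, weak $(1,1)$ bounds, Marcinkiewicz interpolation, and duality --- is genuinely different from the paper's, which deduces Theorem \ref{classic_mikhlin} as a corollary of the generalized fractional result (Theorem \ref{main_thm}): there one checks by the mean value theorem that boundedness of $m$ together with $|m'(x)|\le c/|x|$ forces $|D_\alpha[m](x)|\le c\,|x|^{-\alpha}$, and the fractional theorem, proved by probabilistic Littlewood--Paley estimates for the product process, then applies. Your plan can be made to work, but as written it has a genuine gap at its central step.

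The gap is the claim that the hypotheses yield the pointwise kernel estimates $|\kappa(x)|\lesssim 1/|x|$ and $|\kappa'(x)|\lesssim 1/|x|^2$. One derivative of $m$ is not enough for this, and the claim is in fact false in general. On a dyadic piece $m_j=m\,\psi_j$ (with $\psi_j$ supported where $|\xi|\sim 2^j$) the hypotheses give only $\|m_j\|_{L^1}\lesssim 2^j$ and $\|m_j'\|_{L^1}\lesssim 1$, hence
\begin{align*}
|\kappa_j(x)|\ \lesssim\ \min\left(2^j,\ |x|^{-1}\right),
\end{align*}
and for fixed $x\neq 0$ the scales with $2^j>|x|^{-1}$ contribute infinitely many terms of size about $|x|^{-1}$, so the pointwise sum $\sum_j|\kappa_j(x)|$ diverges; making it converge would require a bound like $2^j(2^j|x|)^{-1-\epsilon}$, i.e.\ strictly more than one derivative of $m$ (this is exactly why the textbook pointwise-kernel version of Mikhlin's theorem in $d=1$ assumes two derivatives). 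The failure is real, not just a lossy estimate: take $\eta\ge 0$ a smooth bump supported in $[1,2]$ and
\begin{align*}
m(\xi)=\sum_{k\ge 1} 2^{-k}\sin(\xi)\,\eta\!\left(2^{-k}|\xi|\right),
\end{align*}
which is bounded and satisfies $|m'(\xi)|\le c/|\xi|$; its kernel contains, up to constants, the sum $\sum_k \check{\eta}\left(2^k(x-1)\right)$, which diverges logarithmically as $x\to 1$, so no bound $|\kappa(x)|\le C/|x|$ can hold near $x=\pm 1$. The repair is H\"ormander's actual argument: do not pass through pointwise bounds at all, but prove the integrated condition $\int_{|x|>2|y|}|\kappa(x-y)-\kappa(x)|\,dx\le C$ directly from Plancherel on each dyadic piece. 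The two $L^2$ bounds $\|\kappa_j\|_{L^2}\lesssim 2^{j/2}$ and $\|x\kappa_j(x)\|_{L^2}\lesssim 2^{-j/2}$, together with $\|\kappa_j(\cdot-y)-\kappa_j\|_{L^2}\lesssim 2^{j}|y|\,2^{j/2}$, yield
\begin{align*}
\int_{|x|>2|y|}|\kappa_j(x-y)-\kappa_j(x)|\,dx\ \lesssim\ \min\left( 2^j|y|,\ (2^j|y|)^{-1/2}\right),
\end{align*}
which sums over $j$ to a constant independent of $y$. With this substitution, the rest of your argument (the $L^2$ bound from Plancherel, the Calder\'on--Zygmund decomposition and weak $(1,1)$ estimate, interpolation, and duality for $2<p<\infty$) is sound and completes the proof.
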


After the developments in \cite{karli_1,karli_2,karli_3}, we have many fundamental tools to work in the case of symmetric stable processes. These tools open a door to a wide range of new applications. One should be able to relax and extend restrictions of the classical theorems using these acquired tools.

In this paper, we focus on the extension of classical Mikhlin multiplier theorem, Theorem \ref{classic_mikhlin}. Our aim is to use fractional derivative and our process, which is a product of a symmetric stable process with a one dimensional Brownian motion, to obtain a general Mikhlin multiplier theorem which presents the statement \ref{classic_mikhlin} as a corollary.

For the rest of this paper, we assume that we are working in dimension one only. The notation $\F(f)$ is reserved for the Fourier transform of an appropriate function $f$, that is,
$$\F(f)=\frac{1}{\sqrt{2\pi}}\int_\R f(y) e^{-ixy} dy.$$

\subsection{Basics on fractional derivatives}

 One approach to define fractional derivative operator is by means of its Fourier transform. It is well-known that for the classical derivative operator $$\F\left(\frac{d^n}{dx^n}f\right)(x)=(ix)^n \F(f)(x).$$ Using this relation, we define the operator $d^\alpha/dx^\alpha$ as the operator for which
\begin{align}\label{fourier_frac_der}
\F\left(\frac{d^\alpha}{dx^\alpha}f\right)(x)=(ix)^\alpha \F(f)(x).
\end{align}
(See \cite[Chapter 2]{frac_calc} for details.) Two widely used definitions of this operator are known as Caputo and Riemann-Liouville forms. Both of these two forms can be obtained from another form of fractional derivative, which is called the generator form (see \cite[page 30]{frac_calc}). In this paper we consider this form whenever we refer fractional derivative. In order to fix the notation we will use $D_\alpha$ to denote the (positive) fractional derivative in generator form. This operator is given by
\begin{align}\label{frac_der_def}
D_\alpha[f](x)=\frac{\alpha}{\Gamma(1-\alpha)}\int_{0}^{\infty} \left( f(x)- f(x-y)\right) \frac{dy}{y^{1+\alpha}}.
\end{align}
with the domain $Dom(D_\alpha)$. Note that this domain includes the set of all bounded functions with continuous bounded first order derivatives, and hence , in particular, compactly supported continuous functions with continuous first order derivatives

In the literature of fractional derivatives, there is a second form, called the negative fractional derivative. It is almost the same integral definition as above except the fact that the domain of the integral is taken to be $(-\infty,0)$. After a basic change of variables, one can define negative fractional derivative in generator form by
\begin{align}
D^-_\alpha[f](x)=\frac{\alpha}{\Gamma(1-\alpha)}\int_{0}^{\infty} \left( f(x)- f(x+y)\right) \frac{dy}{y^{1+\alpha}}
\end{align}
with the domain $Dom(D^-_\alpha)$. These forms are related to one-sided stable processes. The correspondence is between stable processes with positive jumps and positive fractional derivative and between  stable processes with negative jumps and negative fractional derivative. Reader may find the details of this relation in \cite[Chapter 2 and 3]{frac_calc}. Later we will discuss the generator of two-sided symmetric stable processes as well.

Since both Caputo and Riemann-Liouville forms can be obtained by integration by parts applied to (\ref{frac_der_def}), the difference between them seems to be due to the boundary values. Hence by using the generator form, we eliminate this vague point in the definition.

Let us focus on $D_\alpha$  now. We need to check if  Fourier transform of this operator is $(i x)^\alpha$ as desired in  (\ref{fourier_frac_der}). By carrying out the necessary calculations,
\begin{align*}
\F\left(D_\alpha[f]\right)(x) &=\int_{\R } D_\alpha[f](y) e^{-ixy} dy = \frac{\alpha}{\Gamma(1-\alpha)}  \F(f) \int_0^\infty \left( 1-e^{-ixy}\right) \frac{dy}{y^{1+\alpha}}
\end{align*}
where the last integral converges and equals $(ix)^\alpha\Gamma(1-\alpha)/\alpha $. So we have
\begin{align}\label{four_of_Da}
\F\left(D_\alpha[f]\right)(x) &= (ix)^\alpha \F(f).
\end{align}
Similarly, one can compute the Fourier transform of $D^-_\alpha$, which is
\begin{align}\label{four_of_Da_neg}
\F\left(D^-_\alpha[f]\right)(x) &= (-ix)^\alpha \F(f).
\end{align}

Fractional derivative does not share some properties which are satisfied by the classical derivative operator, such as the product (Leibniz) rule. However, we will need this type property for the sake of the proof of the main theorem. Hence we define an alternative relation which is close to the  product rule and call it extended product rule for fractional derivatives. For this purpose, we define two new operators $\Lambda_\alpha$ and $\Lambda^-_\alpha$ as follows: Let $f$ and $g$ be two real valued functions such that $g\in Dom(D_\alpha)$ and $f$ is bounded. Then define
\begin{align}\label{carre2}
\Lambda_\alpha[f,g](x)=\frac{\alpha}{\Gamma(1-\alpha)}\int_{0}^{\infty}(f(x)-f(x-y))(g(x)-g(x-y)) \frac{dy}{y^{1+\alpha}}
\end{align}
 for $x\in \R$. Similarly,  if $g\in Dom(D^-_\alpha)$ and $f$ is bounded then we define
\begin{align}\label{carre3}
\Lambda^-_\alpha[f,g](x)=\frac{\alpha}{\Gamma(1-\alpha)}\int_{0}^{\infty}(f(x)-f(x+y))(g(x)-g(x+y)) \frac{dy}{y^{1+\alpha}}
\end{align}
for $x\in \R$. Note that these operators are well-defined for the given conditions on $f$ and $g$. These operators are in close relation with the {\it Carr\'e de Champ} operator in probability theory (See \cite[page 5]{karli_1}.) This close relation encourages the following lemma.

\begin{lem}[Extended Product Rule]\label{extended_product_rule} {\it Suppose $f$ is a bounded function.
	\begin{itemize}
		\item[i. ] If $f,g \in Dom(D_\alpha)$ then
		\begin{align}
		D_\alpha[f \cdot g]=f\, D_\alpha[g]+g\, D_\alpha[f]-\Lambda_\alpha[f,g].
		\end{align}
		\item[ii. ] If $f,g \in Dom(D^-_\alpha)$ then
		\begin{align}
		D^-_\alpha[f \cdot g]=f\, D^-_\alpha[g]+g\, D^-_\alpha[f]-\Lambda^-_\alpha[f,g].
		\end{align}			
	\end{itemize}}
\end{lem}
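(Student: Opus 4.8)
The plan is to establish the formula by a direct pointwise algebraic decomposition of the integrand defining $D_\alpha[f\cdot g]$, followed by a term-by-term integration that is justified by the convergence built into the hypotheses. I would prove part (i) in full and then observe that (ii) follows by the identical argument with every occurrence of $x-y$ replaced by $x+y$ and $D_\alpha,\Lambda_\alpha$ replaced by $D^-_\alpha,\Lambda^-_\alpha$.

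First I would record the elementary identity, valid for any reals $a,b,c,d$,
$$ac-bd = a(c-d)+c(a-b)-(a-b)(c-d),$$
which is verified by expanding the right-hand side. Applying it with $a=f(x)$, $b=f(x-y)$, $c=g(x)$, $d=g(x-y)$ gives, for each fixed $x$ and every $y>0$,
$$f(x)g(x)-f(x-y)g(x-y)=f(x)\bigl(g(x)-g(x-y)\bigr)+g(x)\bigl(f(x)-f(x-y)\bigr)-\bigl(f(x)-f(x-y)\bigr)\bigl(g(x)-g(x-y)\bigr).$$
This is exactly the integrand of $D_\alpha[f\cdot g](x)$ rewritten as a sum of three pieces, and in the first two the factors $f(x)$ and $g(x)$ are constants in $y$, so they may be pulled outside the integral.

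Next I would divide by $y^{1+\alpha}$, multiply by $\alpha/\Gamma(1-\alpha)$, and integrate over $(0,\infty)$. The step that needs care is the legitimacy of splitting the single integral into three separate integrals; this is valid as soon as each of the three resulting integrals converges. The integrals $\int_0^\infty(g(x)-g(x-y))\,y^{-1-\alpha}\,dy$ and $\int_0^\infty(f(x)-f(x-y))\,y^{-1-\alpha}\,dy$ converge because $f,g\in Dom(D_\alpha)$, and they produce $f(x)\,D_\alpha[g](x)$ and $g(x)\,D_\alpha[f](x)$; the third integral converges and equals $\Lambda_\alpha[f,g](x)$, this being precisely the well-definedness already recorded after the definition (\ref{carre2}). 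Since all three pieces are finite, linearity of the integral yields
$$D_\alpha[f\cdot g]=f\,D_\alpha[g]+g\,D_\alpha[f]-\Lambda_\alpha[f,g],$$
and in particular $f\cdot g\in Dom(D_\alpha)$ as a by-product.

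I expect the main obstacle to be the convergence bookkeeping rather than the algebra. The delicate region is $y\to 0^+$, where each difference behaves like a first derivative times $y$, so that $(f(x)-f(x-y))\,y^{-1-\alpha}\sim f'(x)\,y^{-\alpha}$ is integrable (since $\alpha<1$) and the product term behaves like $f'(x)g'(x)\,y^{1-\alpha}$, which is also integrable; near $y=\infty$ the boundedness of $f$ controls the product term while the membership $f,g\in Dom(D_\alpha)$ controls the two linear terms. It is worth emphasizing where the hypothesis is used essentially: the boundedness of $f$ bounds the factor $f(x)-f(x-y)$ uniformly and is exactly what tames the tail of the product integral, which is why $\Lambda_\alpha[f,g]$ is well-defined under the weaker pairing "$f$ bounded and $g\in Dom(D_\alpha)$."
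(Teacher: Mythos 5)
Your proposal is correct and takes essentially the same route as the paper: your identity $ac-bd=a(c-d)+c(a-b)-(a-b)(c-d)$ is precisely the paper's computation read in reverse, since the paper combines $D_\alpha[f\cdot g]-f\,D_\alpha[g]-g\,D_\alpha[f]$ at the level of integrands and recognizes the result as $-\Lambda_\alpha[f,g]$, while you decompose the integrand of $D_\alpha[f\cdot g]$ into the three corresponding pieces. Your convergence bookkeeping also matches the paper's, which likewise rests on the recorded well-definedness of $\Lambda_\alpha[f,g]$ for $f$ bounded and $g\in Dom(D_\alpha)$ together with the domain hypotheses and linearity of the integral.
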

\begin{proof}
	We prove only the first part of the lemma. The second part is almost identical to the first part.
	
	Let $f$ and $g$ be as given in the hypothesis. Then we have
	\begin{align*}
	& \alpha^{-1}	\Gamma(1-\alpha) \left[ D_\alpha[f\cdot g](x)- f(x)D_\alpha[g](x)-g(x)D_\alpha[f](x) \right]\\
	&\quad=\int_\R (f(x)g(x)-f(x-y)g(x-y)) \frac{dy}{y^{1+\alpha}}\\
	&\quad\qquad \qquad -\int_\R (f(x)g(x)-f(x)g(x-y)) \frac{dy}{y^{1+\alpha}}  \\
	& \quad\qquad \qquad \qquad \qquad -\int_\R (f(x)g(x)-f(x-y)g(x)) \frac{dy}{y^{1+\alpha}}\\
	&\quad= \int_\R -f(x)(g(x)-g(x-y))+f(x-y)(g(x)-g(x-y)) \frac{dy}{y^{1+\alpha}}\\
	&\quad=- \alpha^{-1}	\Gamma(1-\alpha)\Lambda_\alpha[f,g](x).
	\end{align*}
	Since $\Lambda_\alpha[f,g](x)$ is well-defined for $f$ being bounded and $g\in Dom(D_\alpha)$, so is $D_\alpha[f \cdot g]$ with the additional condition $f\in Dom(D_\alpha)$. Hence we obtain the desired result.
\end{proof}

\subsection{Stochastic background and previous results}\label{stochastic_section}

As we mentioned in the beginning of the section, there is close relation between fractional derivative and stable processes. In papers \cite{karli_1,karli_2,karli_3}, we studied the stochastic point of view of this theory in terms of a specific Levy process. Here we recall some fundamental properties and crucial results obtained in \cite{karli_1,karli_2,karli_3} to make this paper as self-sufficient as possible. For the details of the discussions we refer to the mentioned papers.

Our main focus in these papers was a product process $X_t$ with components $Y_t$ and $Z_t$, where $Y_t$ is a d-dimensional symmetric $\alpha$-stable process ($0< \alpha <2$) and $Z_t$ is a 1-dimensional Brownian motion. For this process, we built the fundamental tools of the theory in \cite{karli_1} and defined harmonic extension of a given $L^p(\Rd)$ function to the upper-half space $\Rd\times\R^+$.  By harmonic function, we mean a function which is harmonic with respect to the process $X_t$. Later, based on these harmonic extensions, we defined Littlewood-Paley functions which play a crucial role in the Potential Theory. The key ideas of the structure are as follows. Let $f\in L^p(\Rd)$, $(Y_t,\P^x)$ be a d-dimensional symmetric stable process  started at the point $x\in \Rd$ with the probability measure $\P^x$, $(Z_t,P^a)$ be a 1-dimensional Brownian motion started at the point $a\in \R^+$ with the probability measure $\P^a$. We define the product probability measure $\P^{(x,a)}=\P^x\times \P^a$ for the product process $X_t=(Y_t,Z_t)$ and the stopping time $T_0=\inf\{s \geq 0 : Z_s=0\}$ which is the first hitting time of $X_s$ to the boundary $\Rd\times\{0\}$. We denote expectations with respect to probability measures $\P^a, \P^x$ and $\P^{(x,a)}$ by $\E^a, \E^x$ and $\E^{(x,a)}$, respectively. Then we define the harmonic extension of $f$ to the upper half-space by
\begin{align}\label{harm_ext}
f(x,t)=Q_tf(x)=\E^{(x,a)}(f(X_{T_0}))=\int_0^\infty \E^x(f(Y_s))\P^t(T_0\in ds)
\end{align}
for $(x,t) \in \Rd\times \R^+$. Here we denote both the function and its harmonic extension by the same letter $f$. This $Q_t$ is a convolution semi-group with the kernel $q_t$. So $Q_t$ satisfies the semi-group properties
\begin{align}\label{semigroup_prop}
Q_tQ_s=Q_{t+s}  \qquad \mbox{and} \qquad Q_0=\mbox{identity}.
\end{align}
To see the convolution kernel explicitly, we write
\begin{align*}
Q_tf(x)=f*q_t(x)= \int_{0}^{\infty} \int_{\Rd} f(y) p(s,x,y) dy \, \mu_t(ds),
\end{align*}
with
$$q_t(x)=\int_0^\infty p(s,x,0) \mu_t(ds).$$
Here $p(s,x,y)$ is the transition density of a d-dimensional symmetric $\alpha$-stable process with the Fourier transform
\begin{align}\label{pt_four}
\F(p(t,\cdot,0))(x)=e^{-t|x|^{\alpha}},
\end{align}
and $\mu_t$ is the exit distribution of a 1-dimensional Brownian motion from the domain $[0,\infty)$ with the explicit form
\begin{align*}
\mu_t(ds)= \frac{t}{2\sqrt{\pi}} e^{-t^2/(4s)} s^{-3/2} ds.
\end{align*}
The kernel $q_t$ is a probability kernel  with  the Fourier transform
\begin{align}\label{qt_fourier}
\F(q_t(\cdot))(x)=e^{-t|x|^{\alpha/2}},
\end{align}
which is due to (\ref{pt_four}) and Fubini's theorem since $\int_\R q_t(x)dx=1$.

By means of harmonic extensions, we define the Littlewood-Paley functions for the process $X_t$. The vertical, horizontal and general Littlewood-Paley functions are

\begin{align*}
\disp G^{\uparrow}_f(x) & =\disp\left[ \int_0^\infty t\, \left[ \partial_t Q_t f(x) \right]^2 \,dt  \right]^{1/2} , \\
\overrightarrow{G}_{f,\alpha}(x)  &  =\disp\left[ \int_0^\infty t\, \int_{\{|h|< t^{2/\alpha}\}} (Q_tf(x+h)-Q_tf(x))^2 \, \frac{dh}{|h|^{d+\alpha}} \,dt  \right]^{1/2} , \\
G_{f,\alpha}(x) &=\disp\left[\left(G^{\uparrow}_f(x)\right)^2 + \left(\overrightarrow{G}_{f,\alpha}(x)\right)  \right]^{1/2},
\end{align*}
respectively, for $x\in\Rd$. Then \cite[Theorem 7]{karli_1} together with P.A.Meyer's earlier result (see \cite[section 5]{karli_1}) shows that
\begin{align}\label{g_ineq_1}
\|{f}\|_{L^p(\Rd)} \leq c\, \|{G^{\uparrow}_f}\|_{L^p(\Rd)} \leq c\, \|{G_{f,\alpha}}\|_{L^p(\Rd)} \leq c\, \|{f}\|_{L^p(\Rd)}
\end{align}
for $p >1.$ This result is one of the main  accomplishments of \cite{karli_1}.

In \cite{karli_2}, we continue studying the key functionals of Littlewood-Paley Theory in this new setup. We define these functionals in the case of the process $X_t$ and study their boundedness properties. One of these operators, needed for this paper, is $G^*$ operator. Define the horizontal, the vertical and the general $G^*$ operators for $\lambda>1$ as
\begin{align*}
\overrightarrow{G}_{\lambda,f}^*(x)&=\left[ \int_0^\infty t \cdot K_t^\lambda*\Gamma_\alpha(Q_tf,Q_tf)(x) \, dt\right]^{1/2} , \\
\disp {G}_{\lambda,f}^{*,\uparrow}(x)&=\left[ \int_0^\infty t \cdot K_t^\lambda*(\frac{\partial}{\partial t}Q_tf(\cdot))^2(x) \, dt\right]^{1/2}\\
\disp {G}_{\lambda,f}^{*}(x)&= \left[\left[\overrightarrow{G}_{\lambda,f}^{*}(x)\right]^2+ \left[{G}_{\lambda,f}^{*,\uparrow}(x)\right]^2\right]^{1/2}.
\end{align*}
respectively, where the kernel $K_t^\lambda$ is
\begin{align*}
K_t^\lambda(x)=t^{-2d/\alpha}\left[ \frac{t^{2/\alpha}}{t^{2/\alpha}+|x|}\right]^{\lambda d},\qquad t>0.
\end{align*}
Here $\Gamma_\alpha(\cdot, \cdot)$ is the Carr\'e de Champ operator which is defined similar to (\ref{carre2}). Explicitly,
$$\Gamma_\alpha(Q_tf,Q_tf)(x)=\int_{\Rd} (Q_tf(x+y)-Q_tf(x))^2 \frac{dy}{|y|^{d+\alpha}}.$$
(See \cite[Proposition 1]{karli_1}.)

Moreover, \cite[Theorem 2.4]{karli_2} states that for $p\geq 2$ and $\lambda >1$
\begin{align}\label{g_ineq_2}
\|{{G}_{\lambda,f}^{*,\uparrow}}\|_{L^p(\Rd)}\leq c	\|{{G}_{\lambda,f}^{*}}\|_{L^p(\Rd)}\leq c \|{f}\|_{L^p(\Rd)}.
\end{align}
Combining (\ref{g_ineq_1}) and (\ref{g_ineq_2}), we have the following tool to study boundedness of an operator. If one can prove for a convolution operator $Tf=f*\kappa$ that
\begin{align*}
G^{\uparrow}_{Tf}(x) \leq c \,	{G}_{\lambda,f}^{*,\uparrow}(x)
\end{align*}
holds for almost every $x\in \Rd$, then (\ref{g_ineq_1}) and (\ref{g_ineq_2}) imply
\begin{align*}
\|{Tf}\|_{L^p(\Rd)} \leq c\,  \|{G^{\uparrow}_{Tf}}\|_{L^p(\Rd)} \leq c \,	\|{{G}_{\lambda,f}^{*,\uparrow}}\|_{L^p(\Rd)} \leq c\, \|{f}\|_{L^p(\Rd)} 
\end{align*}
for $p\geq 2$ and $\lambda >1$. This will be the main idea of the proof of the main theorem (Theorem \ref{main_thm}).

Finally, we developed tools to study Fourier multipliers in \cite{karli_3}  and used them to prove a particular multiplier theorem (\cite[Theorem 3.1]{karli_3}). These methods and some intermediate steps allowed us to approach in a different way than the classical version. We develop more on these tools in this paper and prove the extension of classical Mikhlin multiplier theorem in the next section.

\section{Mikhlin multiplier theorem: Generalizations to fractional derivatives}\label{sec:2} 

\setcounter{section}{2}
\setcounter{equation}{0}\setcounter{theorem}{0}

In this section we discuss the main result of this paper. Throughout this section, the letter $c$ is reserved for positive constants whose value may differ from line to line and whose value depend only on the parameter $\alpha$.

Before we state the main theorem, we will prove a technical lemma below which is needed later. For this purpose, let us define the radial function $K(\cdot)$ as
\begin{align}\label{k_func}
K(x)=|x|^{\alpha/2} e^{-\frac{1}{2}|x|^{\alpha/2}} \qquad , x\in \R.
\end{align}


\begin{lem}\label{techLemma}
	{\it	The $L^2(\R)$-norm of the function
	\begin{align}\label{def_of_J}
	J(x)=\int_\R |K(x-y)-K(x)| \frac{dy}{|y|^{1+\alpha}}
	\end{align}
	is bounded whenever $\alpha\in (1/2,1)$.}
\end{lem}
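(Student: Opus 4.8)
The plan is to prove a pointwise bound on $J$ that decays rapidly at infinity and has only a mild, $L^2$-integrable singularity at the origin, and then read off $J\in L^2(\R)$. I would start by recording the elementary features of $K$ that drive everything. Writing $u=|x|^{\alpha/2}$ shows $K(x)=ue^{-u/2}$, which is bounded, so $K\in L^\infty(\R)$; and since $e^{-u/2}$ beats the polynomial factor as $|x|\to\infty$, $K$ decays faster than any power, whence $K\in L^1(\R)\cap L^2(\R)$. Away from the origin $K$ is smooth with $K'(x)=\frac{\alpha}{2}|x|^{\alpha/2-1}(1-\tfrac12|x|^{\alpha/2})e^{-\frac12|x|^{\alpha/2}}\sign(x)$, so $|K'(x)|\sim\frac{\alpha}{2}|x|^{\alpha/2-1}$ near $0$ (which is locally integrable, as $\alpha/2-1>-1$) while $|K'|$ decays rapidly at infinity. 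Finally, the subadditivity of $t\mapsto t^{\alpha/2}$ gives the global H\"older bound $|K(a)-K(b)|\le c|a-b|^{\alpha/2}$.

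I would then split $J(x)=J_{\mathrm{near}}(x)+J_{\mathrm{far}}(x)$ according to $|y|<1$ and $|y|\ge1$. The far piece is the easy one: by the triangle inequality $J_{\mathrm{far}}(x)\le(|K|*g)(x)+|K(x)|\,\|g\|_{L^1}$, where $g(y)=|y|^{-1-\alpha}\ind_{\{|y|\ge1\}}$ satisfies $\|g\|_{L^1}=2/\alpha<\infty$. Since $K\in L^1\cap L^2$, Young's convolution inequality gives $\||K|*g\|_{L^2}\le\|K\|_{L^2}\|g\|_{L^1}<\infty$, and the term $|K(x)|\,\|g\|_{L^1}$ is plainly in $L^2$; hence $J_{\mathrm{far}}\in L^2(\R)$.

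The near piece carries the whole difficulty, because the kernel $|y|^{-1-\alpha}$ is non-integrable at $y=0$ while the cusp of $K$ at the origin rules out a uniform first-order bound; the remedy is to compare $|y|$ with $|x|$. On $|y|\le|x|/2$ the segment joining $x$ and $x-y$ stays in $\{|\xi|\ge|x|/2\}$, so the mean value theorem together with the monotonicity of $|K'|$ near the origin gives $|K(x-y)-K(x)|\le|y|\sup_{|\xi|\ge|x|/2}|K'(\xi)|\le c|y|\,|x|^{\alpha/2-1}$; integrating against $|y|^{-1-\alpha}$ yields $c|x|^{\alpha/2-1}\cdot|x|^{1-\alpha}=c|x|^{-\alpha/2}$. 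On $|x|/2<|y|<1$ I would instead use the H\"older bound, obtaining $c\int_{|x|/2}^{1}r^{-1-\alpha/2}\,dr\le c|x|^{-\alpha/2}$. For $|x|\ge2$ only the first region occurs, and there $\sup_{|\xi|\ge|x|/2}|K'(\xi)|$ is exponentially small, so $J_{\mathrm{near}}(x)$ decays faster than any power. Collecting these estimates gives $J(x)\le c\,|x|^{-\alpha/2}$ for $|x|\le2$ and rapid decay for $|x|\ge2$, and then $\int_{|x|\le2}J(x)^2\,dx\le c\int_{|x|\le2}|x|^{-\alpha}\,dx<\infty$, so $J\in L^2(\R)$ on the stated range.

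The main obstacle is precisely this near-diagonal estimate: a single global bound fails, since the Lipschitz constant of $K$ is infinite at $0$ and the bare H\"older bound $|y|^{\alpha/2}$ is too weak against $|y|^{-1-\alpha}$, so the split $|y|$ versus $|x|$ is essential. I would also remark that the same near-diagonal bound can be derived more slickly by writing $K(x-y)-K(x)=-\int_0^{y}K'(x-s)\,ds$, applying Fubini to get $J_{\mathrm{near}}(x)\le c\,(|K'|*h)(x)$ with $h(s)=s^{-\alpha}\ind_{\{0<s<1\}}$, and invoking Young's inequality with exponents $1/p+1/q=3/2$; here the endpoints $p=1$ and $q=1$ both degenerate once $\alpha\ge1/2$, because then neither $h$ nor $K'$ lies in $L^2$, forcing an intermediate admissible pair, which exists throughout $(1/2,1)$. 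This is the formulation in which the stated range of $\alpha$ enters most transparently.
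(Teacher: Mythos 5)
Your proof is correct and follows essentially the same route as the paper's: the same decomposition into the regions $|y|\ge 1$, $|y|\le |x|/2$, and $|x|/2<|y|<1$ (with $|x|\le 2$ and $|x|\ge 2$ treated separately), the same mean value theorem estimate $\sup_{|\xi|\ge|x|/2}|K'(\xi)|\le c\,|x|^{\alpha/2-1}$ near the diagonal, the same H\"older-type bound $c\,|y|^{\alpha/2}$ in the intermediate region, and exponential decay for large $|x|$. The only cosmetic differences are that you treat the $|y|\ge 1$ piece by Young's convolution inequality where the paper uses Jensen's inequality with the probability kernel $(\alpha/2)|y|^{-1-\alpha}$, and that you integrate a pointwise bound $J(x)\le c\,|x|^{-\alpha/2}$ at the end rather than splitting $\|J\|_{L^2(\R)}^2$ into four integrals from the start.
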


\begin{proof}
	To prove boundedness of $L^2(\R)$-norm, we split the integral
	\begin{align}
	\|{J}\|_{L^2(\R)}^2	= \int_\R \left[ \int_\R |K(x-y)-K(x))| \frac{dy}{|y|^{1+\alpha}}  \right]^2 dx 
	\end{align}
	into 4 sub-integrals and call them
	\begin{align*}
	I_1 & =\int_\R \left[ \int_{|y|>1} |K(x-y)-K(x)| \frac{dy}{|y|^{1+\alpha}}  \right]^2 dx, \\
	I_2 & =\int_{|x|>2} \left[ \int_{|y|<1} |K(x-y)-K(x)| \frac{dy}{|y|^{1+\alpha}}  \right]^2 dx, \\
	I_3 & =\int_{|x|<2} \left[ \int_{|y|<|x|/2} |K(x-y)-K(x)| \frac{dy}{|y|^{1+\alpha}}  \right]^2 dx, \\
	I_4 & =\int_{|x|<2} \left[ \int_{|x|/2<|y|<1} |K(x-y)-K(x)| \frac{dy}{|y|^{1+\alpha}}  \right]^2 dx. \\
	\end{align*}	
	Then we have
	$$ \|{J}\|_{L^2(\R)}^2 \leq c\, (I_1+I_2+I_3+I_4).$$
	First, consider the integral $I_1$. Note that $(\alpha/2) \, |y|^{-1-\alpha}$ is a probability kernel on $(-\infty,-1)\cup(1,\infty)$. Hence, we can obtain from Jensen's Inequality
	\begin{align*}
	I_1 & \leq \frac{2}{\alpha} \int_\R  \int_{|y|>1} (K(x-y)-K(x))^2 \frac{dy}{|y|^{1+\alpha}}   dx \\
	&\leq \frac{4}{\alpha} \int_\R  \int_{|y|>1} K^2(x-y) \frac{dy}{|y|^{1+\alpha}}   dx +  \frac{4}{\alpha} \int_\R  \int_{|y|>1}^\infty K^2(x) \frac{dy}{|y|^{1+\alpha}}   dx,  \\
	&\leq   \frac{8}{\alpha} \int_\R   K^2(x) dx \int_{|y|>1} \frac{dy}{|y|^{1+\alpha}}   dx,
	\end{align*}
	since $\phi(x)=x^2$ is a convex function. In the last line, we used Tonelli's Theorem to interchange the order of integrals and then applied a change of variables. Since both of the last two integrals are finite, we have $I_1 \leq c.$	
	
	Second, consider the integral $I_2$. We will work with the derivative of $K(x)$ which is
	\begin{align}
	K'(x)=\frac{\alpha}{2} \sign(x) e^{-\frac{1}{2}|x|^{\alpha/2}} \left( |x|^{\alpha/2-1}-\frac{1}{2}|x|^{\alpha-1} \right)
	\end{align}
	for $x\not= 0$. Here, $\sign(x)=x/|x|$ for $x\not= 0$. Hence we have the bound
	\begin{align*}
	|K'(x)| \leq \frac{\alpha}{2} \, e^{-\frac{1}{2}|x|^{\alpha/2}} \left( |x|^{\alpha/2-1}+|x|^{\alpha-1} \right).
	\end{align*}
	In the domain of this integral, $|x|>2$ and $|y|<1$ and so we have  $|x-y| \geq |x|/2 \geq 1$. Then for any $\xi \in (x-y,x)$, we have $|\xi|\geq |x|/2 \geq 1$ and so
	\begin{align*}
	|K'(\xi)| \leq  e^{-\frac{1}{4}|x|^{\alpha/2}}.
	\end{align*}
	If we apply Mean Value Theorem to $K(x-y)-K(x)$, then we obtain
	\begin{align*}
	I_2 & \leq   \int_{|x|>2} \left[ \int_{|y|<1} e^{-\frac{1}{4}|x|^{\alpha/2}} \, |y| \, \frac{dy}{|y|^{1+\alpha}}  \right]^2 dx \\
		&   \leq  \int_{|x|>2} e^{-\frac{1}{2}|x|^{\alpha/2}} dx \left[ \int_{|y|<1}  \, |y|^{-\alpha} \, dy  \right]^2  = c, \\
	\end{align*}
	since $\alpha \in (1/2,1).$
	
	Next, consider the integral $I_3$. In its domain, $ |x|/2 \leq |x-y| \leq  3|x|/2 $, hence for any $\xi \in (x-y,x)$, we have $|x|/2 \leq |\xi| \leq 3$ and so
	\begin{align*}
	|K'(\xi))| \leq 3\alpha\, |x|^{\alpha/2 -1}.
	\end{align*}
	Then by Mean Value Theorem,
	\begin{align*}
	|K(x-y)-K(x)| \leq 3\alpha |x|^{\alpha/2 -1} |y|.
	\end{align*}
	Using this bound, we obtain
	\begin{align*}
	I_3 & \leq 9 \alpha^2 \int_{|x|<2} \left[ \int_{|y|<|x|/2} |x|^{\alpha/2 -1} |y|^{-\alpha} dy  \right]^2 dx, \\
	& = 9 \alpha^2  \int_{|x|<2} \left[ \int_{|y|<|x|/2} |x|^{\alpha/2 -1} |y|^{(1-\alpha)/2} \, |y|^{(-1-\alpha)/2}dy  \right]^2 dx, \\
	& \leq  9 \alpha^2  \int_{|x|<2} |x|^{-1} \left[ \int_{|y|<|x|/2} |y|^{(-1-\alpha)/2}dy  \right]^2 dx, \\
	& = c  \int_{|x|<2} |x|^{-\alpha}  dx. \\
	\end{align*}
	The last integral converges, since $\alpha < 1.$
	
	Finally, in the domain of the integral $I_4$, we have $|x| < 2|y|$ and so
	\begin{align*}
	|K(x-y)-K(x)|\leq |x-y|^{\alpha/2}+|x|^{\alpha/2} \leq 5 \, |y|^{\alpha/2}.
	\end{align*}
	Then we have
	\begin{align*}
	I_4  & \leq c  \int_{|x|<2} \left[ \int_{|x|/2<|y|<1}  \frac{dy}{y^{1+\alpha/2}}  \right]^2 dx \leq c \int_{|x|<2} |x|^{-\alpha} dx = c. \\
	\end{align*}
	Therefore, we have  $\|{J}\|_{L^2(\R)}\leq c$ where $c$ depends only on $\alpha$.
\end{proof}


Next, we need an upper bound  for the $L^2(\R)$-norm of the function $$(s^{2/\alpha}+|x|)^{1/\alpha} \, \partial_s Q_{s/2} \kappa(x),$$ for the convolution kernel $\kappa$.
Recall that $Q_s\kappa(x)$ is the harmonic extension of $\kappa(x)$ to $\R\times\R^+$ by (\ref{harm_ext}) with respect to the process $X_t=(Y_t,Z_t)$.
For this purpose, let us recall the (positive) fractional derivative $$D_\alpha[f](x)=\frac{\alpha}{\Gamma(1-\alpha)} \int_0^\infty (f(x)-f(x-y)) \frac{dy}{y^{1+\alpha}},$$
for $\alpha \in (0,1)$ and prove the following result.

\begin{theorem}\label{subintegral}
	Suppose  $\alpha \in (1/2,1)$, $m:\R\rightarrow\R$ is a bounded function with $m \in Dom(D_\alpha),$
	$$ \|m\|_\infty \leq C_1 \quad \mbox{and} \quad \left| D_\alpha[m](x) \right| \leq \frac{C_1}{|x|^\alpha} , \quad x\in \R-\{0\},$$
	for some $C_1 \in \R^+$, and $T_m$ is the convolution operator $T_mf=f*\kappa$ with $\F(\kappa)=m$. Then we have
	$$\|{\left( s^{2/\alpha} + |\cdot| \right)^{\alpha}  \partial_s Q_{s/2} \kappa(\cdot)}\|_{L^2(\R)} \leq c\, C_1 \, s^{1-1/\alpha} \qquad , s>0,$$
	for some $c>0$ depending only on $\alpha$.
\end{theorem}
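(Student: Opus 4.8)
The plan is to pass to the Fourier side, where $\partial_s Q_{s/2}\kappa$ has a clean closed form, and then split the weight $(s^{2/\alpha}+|\cdot|)^\alpha$ using subadditivity of $t\mapsto t^\alpha$. Since $\F(Q_{s/2}\kappa)(x)=e^{-(s/2)|x|^{\alpha/2}}m(x)$ by (\ref{qt_fourier}) and $\F(\kappa)=m$, differentiating in $s$ gives
\begin{align*}
\F(\partial_s Q_{s/2}\kappa)(x)=-\tfrac12|x|^{\alpha/2}e^{-(s/2)|x|^{\alpha/2}}m(x)=-\frac{1}{2s}K_s(x)\,m(x),
\end{align*}
where $K_s(x):=K(s^{2/\alpha}x)$ and I use the identity $\tfrac1s K(s^{2/\alpha}x)=|x|^{\alpha/2}e^{-(s/2)|x|^{\alpha/2}}$, which follows from the homogeneity $|s^{2/\alpha}x|^{\alpha/2}=s|x|^{\alpha/2}$. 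Because $(a+b)^\alpha\leq a^\alpha+b^\alpha$ for $\alpha\in(0,1)$, we have the pointwise bound $(s^{2/\alpha}+|x|)^\alpha\leq s^2+|x|^\alpha$, so by the triangle inequality it suffices to bound $s^2\|\partial_s Q_{s/2}\kappa\|_{L^2(\R)}$ and $\||\cdot|^\alpha\partial_s Q_{s/2}\kappa\|_{L^2(\R)}$ separately, each by $c\,C_1\,s^{1-1/\alpha}$.

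The first piece is immediate from Plancherel. Using $\|m\|_\infty\leq C_1$, the change of variables $u=s^{2/\alpha}x$ yielding $\|K_s\|_{L^2(\R)}=s^{-1/\alpha}\|K\|_{L^2(\R)}$, and $K\in L^2(\R)$, one gets $\|\partial_s Q_{s/2}\kappa\|_{L^2(\R)}\leq\frac{C_1}{2s}\|K_s\|_{L^2(\R)}=c\,C_1\,s^{-1-1/\alpha}$, and multiplying by $s^2$ gives $c\,C_1\,s^{1-1/\alpha}$.

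The second piece is the heart of the argument. Since $|(-ix)^\alpha|=|x|^\alpha$, Plancherel gives $\||\cdot|^\alpha\partial_s Q_{s/2}\kappa\|_{L^2(\R)}=\|\F((-i\cdot)^\alpha\partial_s Q_{s/2}\kappa)\|_{L^2(\R)}$, and the Fourier characterization (\ref{four_of_Da}) dualizes (via $\F^2=$ parity) so that multiplication by $(-ix)^\alpha$ in physical space becomes $D_\alpha$ acting on the transform; that is, $\F((-i\cdot)^\alpha\partial_s Q_{s/2}\kappa)=D_\alpha[\F(\partial_s Q_{s/2}\kappa)]=-\frac{1}{2s}D_\alpha[K_s m]$. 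Applying the extended product rule (Lemma \ref{extended_product_rule}, with $m\in Dom(D_\alpha)$ and $\|m\|_\infty\leq C_1$) splits
\begin{align*}
D_\alpha[K_s m]=K_s\,D_\alpha[m]+m\,D_\alpha[K_s]-\Lambda_\alpha[K_s,m].
\end{align*}
For $K_s D_\alpha[m]$, the hypothesis $|D_\alpha[m](x)|\leq C_1/|x|^\alpha$ reduces the estimate to $C_1(\int K_s^2|x|^{-2\alpha}dx)^{1/2}$; rescaling plus the convergence of $\int K(u)^2|u|^{-2\alpha}du$ (finite since near the origin $K(u)^2|u|^{-2\alpha}\sim|u|^{-\alpha}$ with $\alpha<1$) gives $c\,C_1\,s^{2-1/\alpha}$. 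For $m\,D_\alpha[K_s]$, bound $m$ by $C_1$ and use the scaling $D_\alpha[K_s](x)=s^2(D_\alpha K)(s^{2/\alpha}x)$ together with $D_\alpha K\in L^2(\R)$ (which holds for $\alpha<1$, since $\|D_\alpha K\|_{L^2(\R)}=\||\eta|^\alpha\F(K)\|_{L^2(\R)}<\infty$), again giving $c\,C_1\,s^{2-1/\alpha}$. After the factor $\frac{1}{2s}$ each contributes the desired $c\,C_1\,s^{1-1/\alpha}$.

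The term $\Lambda_\alpha[K_s,m]$ I expect to be the main obstacle, and it is precisely what Lemma \ref{techLemma} was designed to handle. Bounding $|m(x)-m(x-y)|\leq2\|m\|_\infty\leq2C_1$ in (\ref{carre2}) and enlarging the range of integration to all of $\R$ gives the pointwise estimate
\begin{align*}
|\Lambda_\alpha[K_s,m](x)|\leq c\,C_1\int_\R|K_s(x-y)-K_s(x)|\frac{dy}{|y|^{1+\alpha}}=c\,C_1\,s^2\,J(s^{2/\alpha}x),
\end{align*}
where the last identity is the same rescaling applied to the definition (\ref{def_of_J}) of $J$. Taking $L^2$ norms and rescaling once more yields $\|\Lambda_\alpha[K_s,m]\|_{L^2(\R)}\leq c\,C_1\,s^{2-1/\alpha}\|J\|_{L^2(\R)}$, finite by Lemma \ref{techLemma} exactly because $\alpha\in(1/2,1)$; after the factor $\frac{1}{2s}$ this is again $c\,C_1\,s^{1-1/\alpha}$. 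Summing the four contributions closes the estimate. The two points requiring care are the duality identity turning $|x|^\alpha$-multiplication into the \emph{one-sided} operator $D_\alpha$ on the transform (so that only the hypothesized bound on $D_\alpha[m]$ is needed), and verifying that $-\frac{1}{2s}K_s m\in Dom(D_\alpha)$ so that Lemma \ref{extended_product_rule} legitimately applies.
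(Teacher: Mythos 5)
Your overall route coincides with the paper's: pass to the Fourier side where $\F(\partial_s Q_{s/2}\kappa)=-\tfrac{1}{2s}K_s m$, split the weight into an $s^{2}$-piece and an $|\cdot|^{\alpha}$-piece (your subadditivity bound $(s^{2/\alpha}+|x|)^{\alpha}\leq s^{2}+|x|^{\alpha}$ is a cleaner version of the paper's splitting of the integral over $|x|\leq s^{2/\alpha}$ and $|x|>s^{2/\alpha}$), convert multiplication by $|x|^{\alpha}$ into $D_\alpha$ acting on the transform via Plancherel, apply the extended product rule (Lemma \ref{extended_product_rule}), and control $\Lambda_\alpha[K_s,m]$ by reducing to $J$ and Lemma \ref{techLemma}. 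Your treatment of the duality step, identifying $\F((-i\cdot)^{\alpha}g)=D_\alpha[\F(g)]$ via $\F^{2}=$ parity, is in fact more careful than the paper's, and the terms $K_s D_\alpha[m]$ and $\Lambda_\alpha[K_s,m]$ are estimated exactly as in the paper.

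There is, however, one genuine gap: the term $m\,D_\alpha[K_s]$. You reduce it by scaling to the claim $D_\alpha K\in L^{2}(\R)$, and your only justification is the assertion $\|D_\alpha K\|_{L^2(\R)}=\||\eta|^{\alpha}\F(K)\|_{L^2(\R)}<\infty$. The identity is fine, but the finiteness is precisely what has to be proved, and it is not free: $K$ has a cusp $|x|^{\alpha/2}$ at the origin, so $\F(K)(\eta)$ decays only like $|\eta|^{-1-\alpha/2}$, and $\int|\eta|^{2\alpha}|\F(K)(\eta)|^{2}\,d\eta$ converges at infinity only because $2\alpha-2-\alpha<-1$, i.e.\ exactly because $\alpha<1$. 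The claim is borderline, so it cannot be waved through as ``$K$ is nice''; it needs an argument of the same caliber as Lemma \ref{techLemma} --- which is the very estimate the paper built that lemma to supply. The repair stays entirely inside your own framework: bound pointwise
\begin{align*}
\left|D_\alpha[K_s](x)\right|\;\leq\;\frac{\alpha}{\Gamma(1-\alpha)}\int_0^\infty |K_s(x-y)-K_s(x)|\,\frac{dy}{y^{1+\alpha}}\;\leq\; c\,s^{2}\,J(s^{2/\alpha}x)
\end{align*}
(with your normalization of $K_s$), i.e.\ the same quantity you already used for $\Lambda_\alpha[K_s,m]$, and invoke Lemma \ref{techLemma} once more; this is exactly how the paper handles this term, dominating both $m\,D_\alpha[K_s]$ and $\Lambda_\alpha[K_s,m]$ by the single $J$-integral. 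One last caveat (shared with the paper): $D_\alpha[K_s]$ diverges at the single point $x=0$, so the membership $K_s\in Dom(D_\alpha)$ and the product rule should be read almost everywhere, which is all the $L^{2}$ estimate requires.
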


\begin{proof}
	Let $s>0$ and consider the following partition of the square of the desired $L^2$-norm. Set
	\begin{align}\label{partition}
	I_1=& \int_{|x| \leq s^{2/\alpha}}  \left(s^{2/\alpha} + |x| \right)^{2\alpha} \left( \partial_s Q_{s/2} \kappa(x)\right)^2 dx,\\
	I_2=&\int_{|x|>s^{2/\alpha}}  \left(s^{2/\alpha} + |x| \right)^{2\alpha} \left( \partial_s Q_{s/2} \kappa(x)\right)^2 dx,\nonumber
	\end{align}
	so that
	$$\|{\left( s^{2/\alpha} + |\cdot| \right)^{\alpha}  \partial_s Q_{s/2} \kappa(\cdot)}\|_{L^2(\R)}^2 = I_1 + I_2.$$
	We note that $$I_1 \leq 2^{2\alpha}  \int_{|x| \leq s^{2/\alpha}}  s^{4} \left( \partial_s Q_{s/2} \kappa(x)\right)^2 dx.$$
	Moreover, we can bound $\partial_s Q_{s/2} \kappa(x)$ by $L^1$-norm of its Fourier transform. That is,
	\begin{align*}
	\left|\partial_s Q_{s/2}\kappa(x)\right| & \leq \int_\R \left| \F(\partial_s Q_{s/2}\kappa)(x)\right| dx,\\
	&=\frac{1}{2}\int_\R |x|^{\alpha/2} e^{-\frac{s}{2}|x|^{\alpha/2}} |m(x)| dx \\
	&\leq \frac{1}{2} \|m\|_\infty \int_\R |x|^{\alpha/2} e^{-\frac{s}{2}|x|^{\alpha/2}}  dx, \\
	&= \frac{1}{2} \|m\|_\infty\, s^{-1-2/\alpha}\int_\R |x|^{\alpha/2} e^{-\frac{1}{2}|x|^{\alpha/2}}  dx,
	\end{align*}
	by (\ref{qt_fourier}). Since the last integral converges, we have
	\begin{align*}
	I_1&\leq c\, C_1^2 \int_{|x|\leq s^{2/\alpha}} s^{2-4/\alpha} dx \leq c\, C_1^2\, s^{2-2/\alpha}.
	\end{align*}
	
	For the next part, let us define $K_s(x)=|x|^{\alpha/2} e^{-\frac{s}{2}|x|^{\alpha/2}}$ for $s>0$ and $x\in \R$. Here we note that $K_1(x)=K(x)$ where $K(x)$ is as defined in (\ref{k_func}). Then we have
	\begin{align*}
	I_2 &  \leq 2^{2\alpha}\, \int_{\R} |x|^{2\alpha} \left( \partial_s Q_{s/2} \kappa(x)\right)^2 dx
	= 2^{2\alpha} \,  \int_{\R}  \left( (ix)^\alpha \partial_s Q_{s/2} \kappa(x)\right)^2 dx.
	\end{align*}
	By Plancherel's identity, (\ref{four_of_Da}) and (\ref{qt_fourier}), the last integral equals
	\begin{align*}
	\int_{\R}  \left(D_\alpha [\F( \partial_s Q_{s/2} \kappa)](x)\right)^2 dx &=  \int_{\R}  \left(D_\alpha [|\cdot|^{\alpha/2} e^{-\frac{s}{2}|\cdot|^{\alpha/2}} \,m(\cdot) ](x)\right)^2 dx \\& =  \int_{\R}  \left(D_\alpha[ K_s \,m ](x)\right)^2 dx,
	\end{align*}
	which is bounded by 4 times the sum
	\begin{align*}
	    \int_{\R}  \left(\Lambda_\alpha [K_s, m ](x)\right)^2 dx &+ \int_{\R} K_s^2(x) \left(D_\alpha [  m ](x)\right)^2 dx   \\& \quad\quad+  \int_{\R} m^2(x) \left(D_\alpha [K_s ](x)\right)^2 dx
	\end{align*}
	due to the extended product rule (Lemma \ref{extended_product_rule}). By our assumptions on $m$, this sum is bounded by a constant multiple of
	\begin{align*}
	& \int_{\R} K_s^2(x) |x|^{-2\alpha} dx  +   \int_{\R}  \left(\int_0^\infty |K_s(x-y)-K_s(x)| \frac{dy}{y^{1+\alpha}}\right)^2 dx\\
	&=  s^{2-2/\alpha} \int_{\R}  K_1^2(x) |x|^{-2\alpha} dx  +  \int_{\R}  \left(\int_0^\infty |K_s(x-y)-K_s(x)| \frac{dy}{y^{1+\alpha}}\right)^2 dx,
	\end{align*}
	by using the scaling property
	\begin{align}\label{scaling_Ks}
	K_s(x)=s^{-1}K_1(s^{2/\alpha}x)=s^{-1}K(s^{2/\alpha}x).
	\end{align}
	Moreover, we have that $K_1(\cdot)|\cdot|^{-\alpha}\in L^2(\R)$, since $\alpha<1$. Hence it is enough to show that
	\begin{align*}
	I_3:=\int_{\R}  \left(\int_0^\infty |K_s(x-y)-K_s(x)| \frac{dy}{y^{1+\alpha}}\right)^2 dx \leq c \, s^{2-2/\alpha}
	\end{align*}
	to complete the proof.
	
	Using the scaling property (\ref{scaling_Ks}) of $K_s$, we have
	\begin{align*}
	I_3= c \, s^{-2} \int_{\R}  \left(\int_0^\infty |K(s^{2/\alpha}x-s^{2/\alpha}y)-K(s^{2/\alpha}x)| \frac{dy}{y^{1+\alpha}}\right)^2 dx .
	\end{align*}
	Now we apply two steps of change of variables with $z=s^{2/\alpha}x$ and $w=s^{2/\alpha}y$ to obtain
	\begin{align*}
	I_3= c \, s^{2-2/\alpha} \int_{\R}  \left(\int_0^\infty |K(z-w)-K(z)| \frac{dw}{w^{1+\alpha}}\right)^2 dz .
	\end{align*}
	The last integral converges by Lemma \ref{techLemma}. Then the result follows.	
\end{proof}

We are ready to state and prove the main theorem of this paper. The following theorem is a generalization of the classical Mikhlin multiplier theorem. First we prove the general statement below. Then we give reasoning why it is a more general result than the original one.


\begin{theorem}\label{main_thm}
	Suppose  $\alpha \in (1/2,1)$ and that $m:\R\rightarrow\R$ is a bounded function with $m\in Dom(D_\alpha)$, $$ \|m\|_\infty \leq C_1 \quad \mbox{and} \quad \left|  D_\alpha[m](x) \right| \leq \frac{C_1}{|x|^\alpha} , \quad x\in \R-\{0\},$$ for some $C_1>0$. Then $m$ is a Fourier multiplier and the corresponding convolution operator $T_m$ can be extended from $L^p \cap L^2$ to $L^p$ for $p\in(1,\infty)$.
\end{theorem}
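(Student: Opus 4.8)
The plan is to feed $T_m$ into the Littlewood--Paley machinery recorded in (\ref{g_ineq_1}) and (\ref{g_ineq_2}). Since (\ref{g_ineq_1}) gives $\|T_mf\|_{L^p}\le c\,\|G^{\uparrow}_{T_mf}\|_{L^p}$ for every $p>1$, it is enough to dominate the vertical Littlewood--Paley function $G^{\uparrow}_{T_mf}$ pointwise by a $G^*$-functional of $f$ and then apply (\ref{g_ineq_2}). I would prove $\|T_mf\|_{L^p}\le c\,C_1\,\|f\|_{L^p}$ first for $p\ge 2$, the range in which (\ref{g_ineq_2}) is available, and reach $1<p<2$ afterwards by duality. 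Here $T_m$ is understood as the operator defined on $L^p\cap L^2$ (on $L^2$ via Plancherel, using $\|m\|_\infty\le C_1$), so the conclusion is the extension of this a~priori $L^p$-bounded map to all of $L^p$ by density.

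The core is a pointwise bound for $\partial_s Q_s(T_mf)$. Using the semigroup property (\ref{semigroup_prop}) I write $Q_s(T_mf)=(Q_{s/2}\kappa)*(Q_{s/2}f)$ and differentiate in $s$; by the commutativity of multiplication on the Fourier side the two terms produced by the product rule are equal, so
\[
\partial_s Q_s(T_mf)=2\,(\partial_s Q_{s/2}\kappa)*(Q_{s/2}f).
\]
Keeping the $s$-derivative on the kernel factor is essential: the factor $|\xi|^{\alpha/2}$ that $\partial_s$ brings down (visible through (\ref{qt_fourier}) and (\ref{four_of_Da})) is exactly what places $(s^{2/\alpha}+|\cdot|)^{\alpha}\,\partial_s Q_{s/2}\kappa$ in $L^2$ with the bound of Theorem \ref{subintegral}, whereas the undifferentiated $Q_{s/2}\kappa$ fails the corresponding weighted estimate near $\xi=0$, where $|D_\alpha m|$ only decays like $|\xi|^{-\alpha}$. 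Because the Fourier transform of $\partial_s Q_{s/2}\kappa$ carries this factor $|\xi|^{\alpha/2}$, it vanishes at the origin, so $\partial_s Q_{s/2}\kappa$ has mean zero and I may replace $Q_{s/2}f$ by the increments $Q_{s/2}f(\cdot-z)-Q_{s/2}f(\cdot)$. The hypothesis $\alpha>1/2$ already enters here, guaranteeing $\partial_s Q_{s/2}\kappa\in L^1$ because $\int(s^{2/\alpha}+|z|)^{-2\alpha}\,dz<\infty$ precisely when $2\alpha>1$.

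Next I would apply the Cauchy--Schwarz inequality to $2\int \partial_s Q_{s/2}\kappa(z)\,[Q_{s/2}f(x-z)-Q_{s/2}f(x)]\,dz$, distributing the weight $(s^{2/\alpha}+|z|)^{\alpha}$ across the two factors. Theorem \ref{subintegral} bounds the kernel factor by $c\,C_1^2\,s^{2-2/\alpha}$, while the complementary weight $(s^{2/\alpha}+|z|)^{-2\alpha}$ is, up to an explicit power of $s$, the kernel $K_s^{\lambda}$ with $\lambda=2\alpha$. Inserting this into $G^{\uparrow}_{T_mf}(x)^2=\int_0^\infty s\,(\partial_s Q_s(T_mf)(x))^2\,ds$, carrying out the $s$-integration, and reorganizing the increments, the right-hand side should be dominated by a constant multiple of $C_1^2\,G^*_{\lambda,f}(x)^2$ with $\lambda=2\alpha$. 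The requirement $\lambda>1$ in (\ref{g_ineq_2}) then reads $\alpha>1/2$, precisely the standing assumption --- a reassuring scaling check. With this pointwise bound in hand, (\ref{g_ineq_1}) and (\ref{g_ineq_2}) close the range $p\ge 2$, since $\|T_mf\|_{L^p}\le c\,\|G^{\uparrow}_{T_mf}\|_{L^p}\le c\,C_1\,\|G^*_{\lambda,f}\|_{L^p}\le c\,C_1\,\|f\|_{L^p}$.

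For $1<p<2$ I would dualize. The $L^2$-adjoint of $T_m$ is convolution against the multiplier $\xi\mapsto m(-\xi)$, and the hypothesis $|D_\alpha m(\xi)|\le C_1|\xi|^{-\alpha}$ turns into the mirror bound $|D^-_\alpha m(-\xi)|\le C_1|\xi|^{-\alpha}$ for the negative fractional derivative; the versions of Lemma \ref{extended_product_rule}(ii) and of Theorem \ref{subintegral} adapted to $D^-_\alpha$ then give boundedness of the adjoint on $L^{p'}$ for $p'\ge 2$, hence of $T_m$ on $L^p$ for $1<p\le 2$, and a standard density argument extends $T_m$ from $L^p\cap L^2$ to $L^p$. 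The step I expect to be the main obstacle is the identification in the third paragraph: matching the weighted increment integral coming out of Cauchy--Schwarz --- anchored at $x$ and carrying the weight $(s^{2/\alpha}+|z|)^{-2\alpha}$ --- with the $G^*$-functional of (\ref{g_ineq_2}) as literally defined, where $K_s^{\lambda}$ enters as an outer convolution and the spatial increments are organized through $\Gamma_\alpha$. Making this comparison exact, with careful bookkeeping of the powers of $s$ under Tonelli's theorem, is the delicate part; the semigroup identities, the use of Theorem \ref{subintegral}, and the duality and density steps are then routine.
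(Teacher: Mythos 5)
Your overall architecture --- dominate $G^{\uparrow}_{T_mf}$ pointwise using Theorem \ref{subintegral}, close the range $p\geq 2$ with (\ref{g_ineq_1}) and (\ref{g_ineq_2}), then dualize --- is exactly the paper's, and several of your side observations are correct: the mean-zero property of $\partial_s Q_{s/2}\kappa$, the role of $2\alpha>1$, and in particular your treatment of duality (noting that the reflected multiplier $m(-\cdot)$ satisfies the $D^-_\alpha$ hypothesis, so that the $D^-_\alpha$ analogues, i.e.\ Theorem \ref{subintegral2}, are what the adjoint really requires) is \emph{more} careful than the paper's own duality step. However, the step you yourself flag as the ``main obstacle'' is a genuine gap, not delicate bookkeeping. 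By writing $\partial_s Q_s(T_mf)=2(\partial_s Q_{s/2}\kappa)*(Q_{s/2}f)$ you spend the only available $s$-derivative on the kernel factor. After your weighted Cauchy--Schwarz and Theorem \ref{subintegral}, the $f$-side that remains is (up to constants)
\begin{align*}
\int_0^\infty \frac{1}{s}\int_\R K_s^{2\alpha}(z)\left(Q_{s/2}f(x-z)-Q_{s/2}f(x)\right)^2 dz\, ds,
\end{align*}
a square function of \emph{finite increments of $Q_{s/2}f$ anchored at $x$}, weighted by $K_s^{2\alpha}$ with measure $ds/s$. This is neither $\left(G^{*,\uparrow}_{2\alpha,f}\right)^2$, which requires $(\partial_s Q_s f)^2$ under the convolution, nor $\left(\overrightarrow{G}^{*}_{2\alpha,f}\right)^2$, where the increments are organized through the \emph{Carr\'e de Champ} $\Gamma_\alpha$ (integrated against $|y|^{-1-\alpha}dy$ and anchored at the moving point $x-z$). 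None of (\ref{g_ineq_1}), (\ref{g_ineq_2}), or anything cited in the paper gives an $L^p$ bound for this functional. A Plancherel computation shows it is bounded on $L^2$, but for $p>2$ you would need a new Littlewood--Paley-type estimate (essentially redoing the work of \cite{karli_2} for this functional, or a chaining argument converting single increments into $\Gamma_\alpha$ averages); nothing of the sort is sketched, so the chain of inequalities does not close.

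The missing idea is the paper's fundamental-theorem-of-calculus (Hardy-type) maneuver, which produces a derivative on \emph{both} convolution factors. Writing $\partial_s Q_s T_m f = 2\,Q_{s/2}T_m(\partial_s Q_{s/2}f)$, checking that this tends to $0$ as $s\to\infty$ for $f\in\mathcal{C}_c(\R)$, one has
\begin{align*}
Q_{s/2}T_m(\partial_s Q_{s/2}f)(x)=-\int_s^\infty \partial_t\left[Q_{t/2}T_m(\partial_t Q_{t/2}f)(x)\right]dt,
\end{align*}
and then Cauchy--Schwarz in $t$ (splitting $t^{-1}\cdot t$) together with Tonelli gives
$\left(G^{\uparrow}_{T_mf}(x)\right)^2 \leq 4\int_0^\infty t^3 \left(\partial_t\left[Q_{t/2}T_m(\partial_t Q_{t/2}f)(x)\right]\right)^2 dt$.
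The point is that the $t$-derivative of the product distributes one derivative onto each factor (the two cross terms coincide on the Fourier side): the kernel factor becomes $\partial_t Q_{t/2}\kappa$, to which Theorem \ref{subintegral} applies, while the $f$-factor remains $\partial_t Q_{t/2}f$, so that after your same weighted Cauchy--Schwarz the $f$-side is exactly $\int_0^\infty s\, K_s^{2\alpha}*\left(\partial_s Q_{s/2}f\right)^2(x)\,ds$, i.e.\ $\left(G^{*,\uparrow}_{2\alpha,f}(x)\right)^2$ up to constants, to which (\ref{g_ineq_2}) applies since $\lambda=2\alpha>1$. With that step inserted, your argument closes and coincides with the paper's; without it, the identification hoped for in your third paragraph is not available.
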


\begin{proof}
	Let $p\geq 2$ and $f\in L^p(\R)$ be a compactly supported continuous function, that is, $f\in \mathcal{C}_c(\R)$. Let $T_m$ be the convolution operator with kernel $\kappa$ corresponding to m, that is, $\F(T_mf)=\F(f*\kappa)=\F(f)\cdot m$.
	
	We consider the vertical Littlewood-Paley function $G_{T_mf}^\uparrow(x)$. By its definition
	\begin{align*}
	\left( G_{T_mf}^\uparrow(x) \right)^2 = \int_{0}^{\infty} s \left( \partial_s Q_s T_mf(x)\right)^2 \,ds.
	\end{align*}
	We note that by the semi-group property (\ref{semigroup_prop}), we have $Q_s=Q_{s/2}Q_{s/2}$ and $q_s=q_{s/2}*q_{s/2}$, which leads to $\partial_sq_s=2q_{s/2}*\partial_sq_{s/2}$. Next,  we observe that $\partial_s Q_{s}T_mf(x)=2Q_{s/2}T_m(\partial_sQ_{s/2}f)(x)$ by means of  their Fourier transforms, that is,
	\begin{align*}
	& \F( 2Q_{s/2}T_m(\partial_sQ_{s/2}f))=2\F(q_{s/2})\,m\,\F(\partial_sq_{s/2})\F(f) \\
	&\qquad =\F(2q_{s/2}*\partial_sq_{s/2})\,m\,\F(f)=\F(\partial_sq_{s})\,m\,\F(f) =\F(\partial_s Q_{s}T_mf).
	\end{align*}
	Then
	\begin{align}\label{GEqn1}
	\left( G_{T_mf}^\uparrow(x) \right)^2 &=4\int_0^\infty s\, \left|Q_{s/2}T_m(\partial_sQ_{s/2}f)(x) \right|^2 \, ds.
	\end{align}
	Fix any $x\in\R$. Then
	\begin{align*}
	\left|Q_{s/2}T_m(\partial_sQ_{s/2}f)(x) \right| & \leq \int_{\R} \left| \F(Q_{s/2}T_m(\partial_sQ_{s/2}f))(y) \right|dy \\
	&  = \int_{\R} e^{-\frac{s}{2}|y|^{\alpha/2}} |m(y)| \, \, |y|^{\alpha/2} e^{-\frac{s}{2}|y|^{\alpha/2}} |\F(f) (y)| dy \\
	& \leq c\, C_1 \|{f}\|_{L^1(\R)} \int_{\R}K_{2s}(y) dy\\
	& \leq c \, C_1 s^{-1-2/\alpha} \|{K_1}\|_{L^1(\R)} \|{f}\|_{L^1(\R)} 
	\end{align*}
	by scaling (\ref{scaling_Ks}).
	Here, $\|{f}\|_{L^1(\R)}$ is finite since $f\in \mathcal{C}_c(\R)$.
	Then we can see that $Q_{s/2}T_m(\partial_sQ_{s/2}f)\rightarrow 0$ as $s\rightarrow \infty$. Hence the integral in (\ref{GEqn1}) becomes
	\begin{align*}
	4\int_0^\infty s\, \left|\int_s^\infty \frac{t}{t}\, \partial_t Q_{t/2}T_m(\partial_tQ_{t/2}f)(x)dt \right|^2 \, ds.
	\end{align*}
	If we apply the Cauchy-Schwartz inequality here, we obtain
	\begin{align*}
	&\left(G^\uparrow_{T_mf}(x)\right)^2 \\
	&\qquad\leq 4 \int_0^\infty s \left[ \int_s^\infty t^{-2}dt\right] \left[ \int_s^\infty t^2\, (\partial_t Q_{t/2}T_m(\partial_tQ_{t/2}f)(x))^2 dt \right]\, ds\\
	&\qquad=4\, \int_0^\infty   \int_s^\infty t^2\, (\partial_t Q_{t/2}T_m(\partial_tQ_{t/2}f)(x))^2 dt  ds\\
	&\qquad=4\, \int_0^\infty   t^3\, (\partial_t Q_{t/2}T_m(\partial_tQ_{t/2}f)(x))^2 dt  .\\
	\end{align*}
	Since $T_m$ is a convolution operator, the last integral equals
	\begin{align*}
	& \int_0^\infty   s^3\, \left(\int_\R \partial_s Q_{s/2}f(x-y) \cdot \partial_sQ_{s/2}\kappa(y) dy \right)^2 ds =\\
	&   \int_0^\infty   s^3 \left(\int_\R (s^{2/\alpha}+|y|)^{-\alpha} \partial_s Q_{s/2}f(x-y) (s^{2/\alpha}+|y|)^{\alpha} \partial_sQ_{s/2}\kappa(y) dy \right)^2 ds. \\
	\end{align*}
	If we apply Cauchy-Schwartz Inequality, then this integral is bounded by
	\begin{align*}
	&\int_0^\infty   s^3\, \left[ \int_\R (s^{2/\alpha}+|y|)^{-2\alpha} \left(\partial_s Q_{s/2}f(x-y) \right)^2 dy \right.\\
	&\qquad \qquad  \qquad  \qquad  \qquad \left. \cdot \int_{\R} (s^{2/\alpha}+|y|)^{2\alpha}  \left(\partial_sQ_{s/2}\kappa(y)\right)^2 dy \right]  \,ds. \\
	\end{align*}
	By Theorem \ref{subintegral}
	$$\int_{\R} (s^{2/\alpha}+|y|)^{2\alpha}  \left(\partial_sQ_{s/2}\kappa(y)\right)^2 dy \leq c \, C_1^2 s^{2-2/\alpha}.$$
	Hence $\left(G^\uparrow_{T_mf}(x)\right)^2$ is dominated by
	\begin{align*}
	 c\, C_1^2 \int_0^\infty   s\, \int_\R s^{-2/\alpha} \left(\frac{s^{2/\alpha}}{s^{2/\alpha}+|y|} \right)^{2\alpha} \left(\partial_s Q_{s/2}f(x-y) \right)^2 dy \, ds.
	\end{align*}
	Note that the last integral is the definition of the operator $\left(G_{\lambda, f}^{*,\uparrow} \right)^2$ if we take $\lambda=2\alpha$. Here $2\alpha>1$, and so $G_{\lambda, f}^{*,\uparrow} $ is well-defined. This implies that
	\begin{align}\label{ineq_part_1}
	G_{T_mf}^\uparrow(x) \leq c \,C_1\, G_{\lambda, f}^{*,\uparrow}
	\end{align}
	for any $x\in \R$.
	
	By (\ref{g_ineq_2})  (or \cite[Theorem 2.4]{karli_2}) ,
	\begin{align}\label{ineq_part_2}
	\|{ G_{\lambda, f}^{*,\uparrow} }\|_{L^p(\R)} \leq c \|{f}\|_{L^p(\R)}
	\end{align}
	for $f\in L^p(\R)$, $p\geq 2$ and $\lambda>1$. Moreover, by  (\ref{g_ineq_1})  (or \cite[Lemma 1.5]{karli_2}) , we also have
	\begin{align}\label{ineq_part_3}
	\|{ G_{f}^\uparrow }\|_{L^p(\R)} \geq c \|{f}\|_{L^p(\R)}
	\end{align}
	for $f\in L^p(\R)$, $p>1$.
	Hence by putting (\ref{ineq_part_1}), (\ref{ineq_part_2}) and (\ref{ineq_part_3}) together, we have
	\begin{align}\label{Gp_Ineq}
	\|{T_mf}\|_{L^p(\R)} \leq c\,	\|{G_{T_mf}^\uparrow}\|_{L^p(\R)} \leq c \, C_1\, \|{G_{\lambda, f}^{*,\uparrow}}\|_{L^p(\R)} \leq c\,C_1\, \|{f}\|_{L^p(\R)},
	\end{align}
	for $p\geq 2$. Since compactly supported continuous functions are dense in $L^p(\R)$, the inequality  $$	\|{T_mf}\|_{L^p(\R)} \leq c\,C_1	\|{f}\|_{L^p(\R)}$$  extends to all $f\in  L^p(\R)$ with $p\geq 2$.
	
	Finally, for the dual case $p\in (1,2)$, let $f\in L^p(\R)$, $q\in(2,\infty)$ such that $1/p+1/q=1$, and $g\in L^q(\R)$ be a continuous function with compact support. Denote the operator corresponding to the kernel $\tilde \kappa(x)=\kappa(-x)$ by $\tilde T_m$. Then by Fubini's Theorem,
	\begin{align*}
	\left| \int_{\R} T_mf(x)g(x)dx\right| = 	\left| \int_{\R} f(x)\tilde T_m g(x)dx\right| .
	\end{align*}
	By H\"older's Inequality and (\ref{Gp_Ineq}), the inner product above is less than
	\begin{align*}
	\|{f}\|_{L^p(\R)}\|{\tilde T_m g}\|_{L^q(\R)}\leq c\,C_1 \, \|{f}\|_{L^p(\R)}\|{ g}\|_{L^q(\R)}.
	\end{align*}
	Since compactly supported continuous functions are dense in $L^q(\R)$, this holds for all $g\in  L^q(\R)$ with $q \in (2,\infty)$. Hence we have
	$$\|{T_mf}\|_{L^p(\R)}\leq c \, C_1\,\|{f}\|_{L^p(\R)}$$	
	for any $f\in L^p(\R)$ and $p>1$. This completes the proof.
\end{proof}


To underline the importance of this result, we need to point the relation between this result and the classical version of the theorem.
Clearly, the classical version of Mikhlin multiplier theorem (Theorem \ref{classic_mikhlin}) is a corollary of Theorem \ref{main_thm}. To see this, let $m$ be a function satisfying conditions of Theorem \ref{classic_mikhlin}. Then we have
\begin{align*}
\left| D_\alpha[m](x) \right| \leq c\int_{0}^{|x|/2} |m'(\xi_{x,y})| \frac{dy}{y^{\alpha}} + c\int_{0}^{|x|/2}  \frac{dy}{y^{1+\alpha}}
\end{align*}
for some $\xi_{x,y}$ between $x$ and $x-y$ by Mean Value Theorem. Since $y<|x|/2$ for the first integral, we have $|x-y|\geq |x|/2$ and so $|\xi_{x,y}|\geq |x|/2$. Then
\begin{align*}
\left| D_\alpha[m](x) \right| & \leq c\int_{0}^{|x|/2} |\xi_{x,y}|^{-1} \frac{dy}{y^{\alpha}} + c|x|^{-\alpha}\\
&  \leq  c|x|^{-1} \int_{0}^{|x|/2}  \frac{dy}{y^{\alpha}} + c |x|^{-\alpha}  \leq c |x|^{-\alpha}.
\end{align*}
Hence $m \in Dom(D_\alpha)$ and it satisfies conditions of the extended Mikhlin multiplier Theorem (Theorem \ref{main_thm}). This shows that classical Mikhlin multipliers form a subclass of those which are characterized by Theorem \ref{main_thm}.

\smallskip

So far we established the desired result under conditions with respect to the positive fractional derivative $D_\alpha$. In the introductory section, we also defined the negative fractional derivative, which is studied in the literature as much as $D_\alpha$. So one may ask if the same result holds for the negative fractional derivative $D_\alpha^-$. We note that our technique does not rely on difference between $D_\alpha$ and $D_\alpha^-$. Hence it is very tempting to study the same problem for $D_\alpha^-$. Below we prove the statement of the Main Theorem under the conditions given with respect to $D_\alpha^-$. Since the proof shares some details from the lines of the proof for $D_\alpha$, we refer those points to keep ourself from repeating the same arguments. We mainly go over the proof of Theorem  \ref{subintegral} and outline the details which are needed. So we obtain the following result for $D_\alpha^-$.

\begin{theorem}\label{subintegral2}
	Suppose  $\alpha \in (1/2,1)$, $m:\R\rightarrow\R$ is a bounded function with $m\in Dom(D_\alpha^-)$, $$ \|m\|_\infty \leq C_1 \quad \mbox{and} \quad \left| D^-_\alpha[m](x) \right| \leq \frac{C_1}{|x|^\alpha} , \quad x\in \R-\{0\},$$ for some $C_1 \in \R^+$, and $T_m$ is the convolution operator $T_mf=f*\kappa$ with $\F(\kappa)=m$. Then we have
	\begin{align}\label{norm_est}
	\|{\left( s^{2/\alpha} + |\cdot| \right)^{\alpha}  \partial_s Q_{s/2} \kappa(\cdot)}\|_{L^2(\R)} \leq c\, C_1 \, s^{1-1/\alpha} \qquad , s>0,
	\end{align}
	for some $c>0$ depending only on $\alpha$.
\end{theorem}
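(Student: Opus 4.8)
The plan is to mirror the proof of Theorem \ref{subintegral} line by line, since, as the authors observe, the method does not distinguish between $D_\alpha$ and $D_\alpha^-$. I would begin with the identical dyadic partition of the squared $L^2$-norm into the near piece over $\{|x| \le s^{2/\alpha}\}$ and the far piece over $\{|x| > s^{2/\alpha}\}$. The bound on the near piece carries over verbatim, because it rests only on $\|m\|_\infty \le C_1$ together with the pointwise estimate for $\partial_s Q_{s/2}\kappa(x)$ obtained from the $L^1$-norm of its Fourier transform $\frac12|\cdot|^{\alpha/2}e^{-\frac{s}{2}|\cdot|^{\alpha/2}}m$ and the scaling (\ref{scaling_Ks}); none of these quantities sees the direction of the fractional derivative, so the near piece is again $\le c\,C_1^2 s^{2-2/\alpha}$.

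For the far piece I would replace the appeal to (\ref{four_of_Da}) by its negative counterpart (\ref{four_of_Da_neg}). Since $|(-ix)^\alpha|^2 = |x|^{2\alpha} = |(ix)^\alpha|^2$, Plancherel's identity again converts the far piece into $\int_\R \bigl(D_\alpha^-[K_s\,m]\bigr)^2\,dx$, where $K_s = |\cdot|^{\alpha/2}e^{-\frac{s}{2}|\cdot|^{\alpha/2}}$. Applying part (ii) of the extended product rule (Lemma \ref{extended_product_rule}) in place of part (i) splits this into $\int(\Lambda_\alpha^-[K_s,m])^2$, $\int K_s^2 (D_\alpha^-[m])^2$, and $\int m^2 (D_\alpha^-[K_s])^2$. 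The middle term is controlled by the hypothesis $|D_\alpha^-[m](x)| \le C_1|x|^{-\alpha}$ and the fact that $K_1|\cdot|^{-\alpha}\in L^2(\R)$, contributing order $s^{2-2/\alpha}$; the boundedness of $m$ and the crude bound $|m(x)-m(x+y)| \le 2C_1$ reduce the remaining two terms to the single quantity
\begin{align*}
I_3^- = \int_\R \left(\int_0^\infty |K_s(x+y) - K_s(x)| \frac{dy}{y^{1+\alpha}}\right)^2 dx,
\end{align*}
the exact analog of $I_3$ with $x-y$ replaced by $x+y$.

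The only genuinely new point, and the step I expect to require a moment's care, is showing that $I_3^-$ obeys the same bound $c\,s^{2-2/\alpha}$ as $I_3$. I would dispatch it using the evenness of $K$. Since $K(u)=K(-u)$, the substitution $w\mapsto -w$ rewrites the inner integral $\int_0^\infty |K(z+w)-K(z)|\,w^{-1-\alpha}\,dw$ as $\int_0^\infty |K(-z-w)-K(-z)|\,w^{-1-\alpha}\,dw$, so that after the same two changes of variables $z=s^{2/\alpha}x$, $w=s^{2/\alpha}y$ used for $I_3$, the reflection $z\mapsto -z$ inside the outer $L^2$-integral identifies $I_3^-$ with precisely the integral already bounded in the proof of Theorem \ref{subintegral}. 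Equivalently, the one-sided integral with $x+y$ is dominated by the full two-sided integral $J(x)=\int_\R |K(x-y)-K(x)|\,|y|^{-1-\alpha}\,dy$ of (\ref{def_of_J}), whose $L^2$-norm is finite for $\alpha\in(1/2,1)$ by Lemma \ref{techLemma}. Collecting the near piece, the middle term, and $I_3^-$ then yields (\ref{norm_est}), completing the proof.
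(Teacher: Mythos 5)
Your proposal is correct and follows essentially the same route as the paper's own proof: the identical partition into near and far pieces, Plancherel with (\ref{four_of_Da_neg}) in place of (\ref{four_of_Da}), part (ii) of Lemma \ref{extended_product_rule}, and reduction of the remaining terms to the one-sided $x+y$ integral, which (as you note, via evenness of $K$ or direct domination by $J$) is controlled by Lemma \ref{techLemma} after scaling. Your handling of the middle term via $K_1(\cdot)\,|\cdot|^{-\alpha}\in L^2(\R)$ is in fact slightly more careful than the paper's own write-up, which omits the weight $|x|^{-2\alpha}$ in the corresponding display.
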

\begin{proof}
	Consider the same partition $I_1$ and $I_2$ of the square of the norm (\ref{norm_est}) as in (\ref{partition}) and consider two integrals on the domains ${|x| \leq s^{2/\alpha}}$ and ${|x| > s^{2/\alpha}}$, respectively. The same argument as in the first part of the proof of Theorem  \ref{subintegral} shows that $$I_1\leq c\,C_1^2\,s^{2-2/\alpha}.$$
	For $I_2$, note that we have
	\begin{align*}
	I_2 &  \leq 2^{2\alpha}\, \int_{\R} |x|^{2\alpha} \left( \partial_s Q_{s/2} \kappa(x)\right)^2 dx  = 2^{2\alpha} \,  \int_{\R}  \left( (-ix)^\alpha \partial_s Q_{s/2} \kappa(x)\right)^2 dx\\
	& = 	\int_{\R}  \left(D^-_\alpha [\F( \partial_s Q_{s/2} \kappa)](x)\right)^2 dx    =  \int_{\R}  \left(D^-_\alpha [K_s \,m ](x)\right)^2 dx
	\end{align*}
	by (\ref{four_of_Da_neg}) and the Plancherel's identity. The last integral is bounded by 4 times the sum
	\begin{align*}
	&    \int_{\R}  \left(\Lambda^-_\alpha [K_s , m ](x)\right)^2 dx + \int_{\R} K_s^2(x) \left(D^-_\alpha [  m ](x)\right)^2 dx\\
	&\qquad\qquad +  \int_{\R} m^2(x) \left(D^-_\alpha [ K_s ](x)\right)^2 dx
	\end{align*}
	due to the extended product rule (Lemma \ref{extended_product_rule}). By the assumptions on $m$ and $D^-_\alpha [  m ]$, the last line is bounded by a constant multiple of
	\begin{align*}
	&  C_1^2 s^{2-2/\alpha} \int_{\R}  K_1^2(x)  dx  + C_1^2 \int_{\R}  \left(\int_0^\infty |K_s(x+y)-K_s(x)| \frac{dy}{y^{1+\alpha}}\right)^2 dx\\
	&\qquad\qquad\leq  C_1^2 s^{2-2/\alpha} \left( \|{K_1}\|_{L^2(\R)}^2 +  \|{J}\|_{L^2(\R)}^2 \right) \leq c\, C_1^2 s^{2-2/\alpha},
	\end{align*}
	where $J$ is defined in (\ref{def_of_J}).
\end{proof}


Since we have the same estimate (\ref{norm_est}) as in Theorem \ref{subintegral}, and (\ref{norm_est}) is all we need to prove the main theorem above, we obtain the same argument when we replace $D_\alpha$ by $D^-_\alpha$. Hence the proof of the following theorem follows from the lines of the proof of the main theorem.


\begin{theorem}\label{main_thm_2}
	Suppose  $\alpha \in (1/2,1)$ and that $m:\R\rightarrow\R$ is a bounded function with $m\in Dom(D_\alpha^-)$, $$ \|m\|_\infty \leq C_1 \quad \mbox{and} \quad \left|  D^-_\alpha[m](x) \right| \leq \frac{C_1}{|x|^\alpha} , \quad x\in \R-\{0\},$$ for some $C_1>0$. Then $m$ is a Fourier multiplier and the corresponding convolution operator $T_m$ can be extended from $L^p \cap L^2$ to $L^p$ for $p\in(1,\infty)$.
\end{theorem}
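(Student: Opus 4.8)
The plan is to run the proof of Theorem~\ref{main_thm} line for line, with Theorem~\ref{subintegral2} playing the role that Theorem~\ref{subintegral} played there. The essential observation is that the estimate (\ref{norm_est}) is the \emph{only} place where the proof of the main theorem interacts with the multiplier hypothesis; every other ingredient---the Littlewood--Paley inequalities (\ref{g_ineq_1}) and (\ref{g_ineq_2}), the semigroup splitting $Q_s=Q_{s/2}Q_{s/2}$ from (\ref{semigroup_prop}), and the $G^{*}$ machinery---is phrased entirely in terms of the process $X_t$ and its symmetric kernel $q_t$, whose Fourier transform (\ref{qt_fourier}) is real, even, and insensitive to the sign convention of the fractional derivative. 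Since Theorem~\ref{subintegral2} delivers exactly the bound (\ref{norm_est}) under the $D^-_\alpha$ hypotheses, the whole scheme transfers.

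Concretely, I would fix $p\geq 2$ and $f\in\mathcal{C}_c(\R)$ and start from $\left(G_{T_mf}^\uparrow(x)\right)^2=4\int_0^\infty s\,|Q_{s/2}T_m(\partial_sQ_{s/2}f)(x)|^2\,ds$, using the identity $\partial_sQ_sT_mf=2Q_{s/2}T_m(\partial_sQ_{s/2}f)$, which is verified purely through Fourier transforms and does not distinguish $D_\alpha$ from $D^-_\alpha$. The decay $Q_{s/2}T_m(\partial_sQ_{s/2}f)\to 0$ as $s\to\infty$ uses only $\|m\|_\infty\leq C_1$ together with the explicit kernel, so it is unchanged. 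Two applications of Cauchy--Schwarz, inserting the weights $(s^{2/\alpha}+|y|)^{\pm\alpha}$, reduce the problem to the product of $\int_\R(s^{2/\alpha}+|y|)^{-2\alpha}(\partial_sQ_{s/2}f(x-y))^2\,dy$ with the kernel integral $\int_\R(s^{2/\alpha}+|y|)^{2\alpha}(\partial_sQ_{s/2}\kappa(y))^2\,dy$. Bounding the latter by $c\,C_1^2 s^{2-2/\alpha}$ \emph{now via Theorem~\ref{subintegral2}} yields the pointwise domination $G_{T_mf}^\uparrow(x)\leq c\,C_1\,G_{\lambda,f}^{*,\uparrow}(x)$ with $\lambda=2\alpha>1$, exactly as in (\ref{ineq_part_1}).

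The conclusion for $p\geq 2$ then follows by chaining (\ref{ineq_part_3}) and (\ref{ineq_part_2}) to obtain $\|T_mf\|_{L^p(\R)}\leq c\,C_1\,\|f\|_{L^p(\R)}$ on the dense class $\mathcal{C}_c(\R)$, and extending by density. For the dual range $p\in(1,2)$ I would repeat the transpose argument with $\tilde\kappa(x)=\kappa(-x)$; here it is worth noting that because $q_s$ is even one has $\partial_sQ_{s/2}\tilde\kappa(x)=\partial_sQ_{s/2}\kappa(-x)$, so the estimate (\ref{norm_est}) is automatically invariant under this reflection and $\tilde T_m$ obeys the same bound without re-examining the fractional-derivative condition. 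H\"older's inequality and the already established $p\geq 2$ case then close the argument.

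The main point to be careful about---rather than a genuine obstacle, since the heavy lifting is already done in Theorem~\ref{subintegral2}---is to confirm that no intermediate step covertly exploits the direction of the difference quotient defining the fractional derivative. Once one checks that the semigroup identity, the decay estimate, and the reduction to $G_{\lambda,f}^{*,\uparrow}$ are all sign-agnostic, the theorem is an immediate transcription of the proof of Theorem~\ref{main_thm} with the single substitution of Theorem~\ref{subintegral2} for Theorem~\ref{subintegral}.
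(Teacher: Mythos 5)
Your proposal is correct and follows essentially the same route as the paper, which itself disposes of Theorem~\ref{main_thm_2} by remarking that the estimate (\ref{norm_est}) of Theorem~\ref{subintegral2} is the only input the proof of Theorem~\ref{main_thm} requires, so that proof transfers verbatim. Your additional observation in the duality step---that the evenness of $q_s$ and the radial weight $\left(s^{2/\alpha}+|\cdot|\right)^{\alpha}$ make (\ref{norm_est}) invariant under the reflection $\tilde\kappa(x)=\kappa(-x)$---is a point the paper passes over silently, and it correctly closes the one place where the sign of the fractional derivative could have mattered.
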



\section{Notes on the connection with symmetric stable processes } \label{sec:3} 
\setcounter{section}{3}
\setcounter{equation}{0}\setcounter{theorem}{0}

In this last section, we discuss similar results in terms of the infinitesimal generator $\Lgen$ given in (\ref{gen_of_stable}). The infinitesimal generator is given by means of semi-groups corresponding to  the process $Y_s$ as defined in the preliminary section. If $Y_s$ is a one dimensional symmetric stable process then $\{P_s\}_{s\geq 0}$, given by
\begin{align*}
P_s(f)(x)=\E^x(f(Y_s)),
\end{align*}
is a well defined a semi-group on the space of continuous functions vanishing at infinity. It satisfies two properties of semi-groups: $P_sP_t=P_{s+t}$ and $P_0f=f$ for any continuous function $f$ vanishing at infinity. The operator $\Lgen$ is then defined as the limit
\begin{align*}
\Lgen [f](x)=\lim\limits_{t\rightarrow 0}\frac{P_tf(x)-f(x)}{t}
\end{align*}
for any $x\in \R$ with the domain $Dom(\Lgen)$. Since its domain includes the continuous functions vanishing at infinity, this operator can be extended to the space $L^P(\R)$. Since a symmetric stable process is a Levy process, one can use the Levy representation to write this operator $\Lgen$ in the form of (\ref{gen_of_stable})

\begin{align*}
\Lgen[f](x)=\frac{\alpha}{\Gamma(1-\alpha)}\int_{\Rd-\{0\}} (f(y+x)-f(x)) \frac{dy}{|y|^{d+\alpha}}
\end{align*}
whenever $\alpha\in(0,1)$ with the Fourier transform $|x|^\alpha$.

Observe that both of the operators $D^-\alpha$ and $D_\alpha$ are actually the operator $\Lgen$ restricted to negative and positive real line, respectively. Hence we have the relation
\begin{align*}
\Lgen=-D_\alpha-D^-_\alpha
\end{align*}
with $Dom(\Lgen) \subseteq Dom(D_\alpha) \cap Dom(D_\alpha^-)$.

At this point, we ask the question if we can replace the condition on fractional derivative in Theorem \ref{main_thm} with a similar condition on $\Lgen$, that is, do we have the same conclusion if $\left|\Lgen[m](x)\right| \leq \frac{C_1}{|x|^\alpha}$ for $\alpha\in(0,1)$. We note that this cannot be concluded form either of Theorem \ref{main_thm} or Theorem \ref{main_thm_2}. However, the proof of both of these theorems rely on Theorem \ref{subintegral} and Theorem \ref{subintegral2}, which can be achieved for $\Lgen$ as well.

\begin{theorem}\label{subintegral3}
	Suppose  $\alpha \in (1/2,1)$, $m:\R\rightarrow\R$ is a bounded function with $m\in Dom(\Lgen)$, $$ \|m\|_\infty \leq C_1 \quad \mbox{and} \quad \left| \Lgen[m](x) \right| \leq \frac{C_1}{|x|^\alpha} , \quad x\in \R-\{0\},$$ for some $C_1 \in \R^+$, and $T_m$ is the convolution operator $T_mf=f*\kappa$ with $\F(\kappa)=m$. Then we have
		\begin{align}\label{norm_est_2}
	\|{\left( s^{2/\alpha} + |\cdot| \right)^{\alpha}  \partial_s Q_{s/2} \kappa(\cdot)}\|_{L^2(\R)} \leq c\, C_1 \, s^{1-1/\alpha} \qquad , s>0,
	\end{align}
	for some $c>0$ depending only on $\alpha$.
\end{theorem}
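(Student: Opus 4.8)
The plan is to follow the proofs of Theorem \ref{subintegral} and Theorem \ref{subintegral2} line by line, splitting the square of the left-hand side of (\ref{norm_est_2}) into the same two pieces $I_1$ (over $|x|\le s^{2/\alpha}$) and $I_2$ (over $|x|>s^{2/\alpha}$) as in (\ref{partition}). The estimate $I_1\le c\,C_1^2\,s^{2-2/\alpha}$ is obtained verbatim from the first part of Theorem \ref{subintegral}: that bound uses only $\|m\|_\infty\le C_1$ together with (\ref{qt_fourier}) and never sees which fractional derivative is in play. So the entire argument rests on the piece $I_2$, where the hypothesis on $\Lgen[m]$ must enter.

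For $I_2$ I would first record the Fourier symbol of $\Lgen$. Combining $\F(D_\alpha[f])=(ix)^\alpha\F(f)$ and $\F(D^-_\alpha[f])=(-ix)^\alpha\F(f)$ from (\ref{four_of_Da}) and (\ref{four_of_Da_neg}) with the relation $\Lgen=-D_\alpha-D^-_\alpha$ gives
\begin{align*}
\F(\Lgen[f])(x)=-\bigl((ix)^\alpha+(-ix)^\alpha\bigr)\F(f)(x)=-2\cos(\alpha\pi/2)\,|x|^\alpha\,\F(f)(x),
\end{align*}
a nonzero constant multiple of $|x|^\alpha$ since $\alpha\in(1/2,1)$ forces $\cos(\alpha\pi/2)>0$, consistent with the symbol $|x|^\alpha$ recorded for $\Lgen$ above. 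Hence, exactly as in Theorem \ref{subintegral}, bounding $I_2$ by $c\int_\R |x|^{2\alpha}(\partial_s Q_{s/2}\kappa(x))^2\,dx$ and invoking Plancherel together with $\F(\partial_s Q_{s/2}\kappa)=-\tfrac12 K_s\,m$ (from (\ref{qt_fourier})) reduces everything to proving
\begin{align*}
\int_\R \bigl(\Lgen[K_s\,m](x)\bigr)^2\,dx \le c\,C_1^2\,s^{2-2/\alpha}.
\end{align*}

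To treat $\Lgen[K_s m]$ I would derive an extended product rule for $\Lgen$ by adding the two rules of Lemma \ref{extended_product_rule}. Applying them with the bounded factor $m\in Dom(\Lgen)\subseteq Dom(D_\alpha)\cap Dom(D^-_\alpha)$ and $g=K_s$, and using $D_\alpha[m]+D^-_\alpha[m]=-\Lgen[m]$ (and the same identity for $K_s$), the single-derivative contributions recombine into
\begin{align*}
\Lgen[K_s\,m]=K_s\,\Lgen[m]+m\,\Lgen[K_s]+\Lambda_\alpha[K_s,m]+\Lambda^-_\alpha[K_s,m].
\end{align*}
Squaring and using $(a+b+c+d)^2\le 4(a^2+b^2+c^2+d^2)$ leaves four integrals. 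The first, $\int_\R K_s^2(\Lgen[m])^2$, is $\le C_1^2\int_\R K_s^2(x)|x|^{-2\alpha}\,dx$ by the hypothesis $|\Lgen[m](x)|\le C_1/|x|^\alpha$, and the scaling (\ref{scaling_Ks}) turns this into $c\,C_1^2\,s^{2-2/\alpha}$ because $K_1(\cdot)|\cdot|^{-\alpha}\in L^2(\R)$. For the two $\Lambda$ terms, the crude bound $|m(x)-m(x\mp y)|\le 2\|m\|_\infty\le 2C_1$ dominates them by constant multiples of the one-sided integrals $\int_0^\infty|K_s(x\mp y)-K_s(x)|\,y^{-1-\alpha}\,dy$, which are precisely the quantities already controlled in Theorem \ref{subintegral} and Theorem \ref{subintegral2}; after scaling they are bounded by $\|J\|_{L^2(\R)}^2$, hence by $c\,s^{2-2/\alpha}$ via Lemma \ref{techLemma}. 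The remaining term $\int_\R m^2(\Lgen[K_s])^2\le C_1^2\int_\R(\Lgen[K_s])^2$ is the genuinely two-sided one: $|\Lgen[K_s](x)|\le c\int_\R|K_s(x+y)-K_s(x)|\,|y|^{-1-\alpha}\,dy$, and after scaling by (\ref{scaling_Ks}) this is exactly the full-line integral defining $J$ in (\ref{def_of_J}), so Lemma \ref{techLemma} again gives $c\,s^{2-2/\alpha}$. Summing the four contributions yields $I_2\le c\,C_1^2\,s^{2-2/\alpha}$ and, with $I_1$, the claim.

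I expect no genuinely new analytic difficulty: the one nontrivial fact, $\|J\|_{L^2(\R)}\le c$, is already isolated in Lemma \ref{techLemma}, and its full-line form is precisely what the $\Lgen[K_s]$ term demands. The only delicate points should be bookkeeping ones — checking that the signs in $\Lgen=-D_\alpha-D^-_\alpha$ make the two product rules recombine cleanly, and confirming that $m\in Dom(\Lgen)$ legitimately feeds both Lemma \ref{extended_product_rule} and the Plancherel reduction.
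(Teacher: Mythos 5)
Your proposal is correct and follows essentially the same route as the paper: the same partition into $I_1$ and $I_2$ from (\ref{partition}), the same Plancherel reduction of $I_2$ to $\int_\R(\Lgen[K_s m])^2\,dx$, the same extended product rule $\Lgen[fg]=f\Lgen[g]+g\Lgen[f]+\Lambda_\alpha[f,g]+\Lambda^-_\alpha[f,g]$ obtained by summing the two parts of Lemma \ref{extended_product_rule}, and the same final appeal to scaling (\ref{scaling_Ks}) and Lemma \ref{techLemma}. Your only departures are cosmetic and in fact slightly more careful than the paper's write-up: you track the exact symbol $-2\cos(\alpha\pi/2)|x|^\alpha$ of $\Lgen$ (verifying it is a nonzero multiple of $|x|^\alpha$) and the factor $-\tfrac12$ in $\F(\partial_sQ_{s/2}\kappa)=-\tfrac12 K_s m$, and you bound the $m^2(\Lgen[K_s])^2$ term directly by the full-line integral $J$ rather than by the sum of its two one-sided pieces.
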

\begin{proof}
	We use the same partition $I_1$ and $I_2$ of the square of the norm as in (\ref{partition}). The first part of  the proof of Theorem  \ref{subintegral} shows that $$I_1\leq c\,C_1\,s^{2-2/\alpha}.$$
	For the next part, we have
	\begin{align*}
	I_2 &  \leq 2^{2\alpha}\, \int_{\R} |x|^{2\alpha} \left( \partial_s Q_{s/2} \kappa(x)\right)^2 dx  = 2^{2\alpha} \,  \int_{\R}  \left( |x|^\alpha \partial_s Q_{s/2} \kappa(x)\right)^2 dx\\
	& = 	\int_{\R}  \left(\Lgen [\F( \partial_s Q_{s/2} \kappa)](x)\right)^2 dx    =  \int_{\R}  \left(\Lgen [K_s \, m ](x)\right)^2 dx,
	\end{align*}
	by Plancherel's identity. Next, by the extended product rule, we have
	\begin{align*}
	\Lgen[fg]&=-D_\alpha[fg]-D^-_\alpha[fg] \\
	& = -fD_\alpha[g]-gD_\alpha[f]+\Lambda_\alpha[f,g]-fD^-_\alpha[g]-gD^-_\alpha[f]+\Lambda^-_\alpha[f,g]\\
	&= f\Lgen[g]+g\Lgen[f]+\Lambda_\alpha[f,g]+\Lambda^-_\alpha[f,g]
	\end{align*}
	for $f,g\in Dom(\Lgen)\subset Dom(D_\alpha)\cap Dom(D_\alpha^-)$. Hence
	\begin{align*}
	I_2 & \leq   4\, \int_{\R} K_s^2(x) \left(\Lgen [  m ](x)\right)^2 dx+ 4\, \int_{\R} m^2(x) \left(\Lgen [K_s ](x)\right)^2 dx  \\
	& \qquad + 4\, \int_{\R}  \left(\Lambda_\alpha [K_s , m ](x)\right)^2 dx + 4\, \int_{\R}  \left(\Lambda^-_\alpha [K_s , m ](x)\right)^2 dx.
	\end{align*}
	The last line above is bounded by a constant multiple of
	\begin{align*}
	& C_1^2 s^{2-2/\alpha} \int_{\R} K_1^2(x)  dx  + C_1^2 \int_{\R}  \left(\int_0^\infty |K_s(x-y)-K_s(x)| \frac{dy}{y^{1+\alpha}}\right)^2 dx \\
	&\qquad\qquad  \qquad+ C_1^2 \int_{\R}  \left(\int_0^\infty |K_s(x+y)-K_s(x)| \frac{dy}{y^{1+\alpha}}\right)^2 dx\\
	&\qquad\leq  C_1^2 s^{2-2/\alpha} \left( \|{K_1}\|_{L^2(\R)}^2  + 2 \|{J}\|_{L^2(\R)}^2 \right) \leq c\, C_1^2 s^{2-2/\alpha}.
	\end{align*}
\end{proof}

Then Theorem \ref{main_thm_3} below follows from the lines of the proof of the Theorem \ref{main_thm}.



\begin{theorem}\label{main_thm_3}
	Suppose  $\alpha \in (1/2,1)$ and that $m:\R\rightarrow\R$ is a bounded function with $m\in Dom(\Lgen)$, $$ \|m\|_\infty \leq C_1 \quad \mbox{and} \quad \left|  \Lgen[m](x) \right| \leq \frac{C_1}{|x|^\alpha} , \quad x\in \R-\{0\},$$ for some $C_1>0$. Then $m$ is a Fourier multiplier and the corresponding convolution operator $T_m$ can be extended from $L^p \cap L^2$ to $L^p$ for $p\in(1,\infty)$.
\end{theorem}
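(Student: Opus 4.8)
The plan is to transcribe the proof of Theorem \ref{main_thm} essentially verbatim, replacing its single appeal to Theorem \ref{subintegral} by an appeal to Theorem \ref{subintegral3}. This is legitimate because, in the argument for Theorem \ref{main_thm}, the hypotheses on $m$ are used in only two places: the boundedness $\|m\|_\infty\leq C_1$, which serves solely to produce the decay $Q_{s/2}T_m(\partial_sQ_{s/2}f)\to 0$ as $s\to\infty$, and the weighted $L^2$-estimate on the kernel furnished by Theorem \ref{subintegral}. Both of these are available here verbatim, since $\|m\|_\infty\leq C_1$ is assumed and Theorem \ref{subintegral3} delivers precisely the estimate (\ref{norm_est_2}), identical in form to (\ref{norm_est}) but now under the condition on $\Lgen$.

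First I would fix $p\geq 2$ and a compactly supported continuous $f\in\mathcal{C}_c(\R)$ and expand the squared vertical Littlewood--Paley function of $T_mf$. Using the semigroup identity $Q_s=Q_{s/2}Q_{s/2}$ from (\ref{semigroup_prop}) and comparing Fourier transforms, I would rewrite $\partial_sQ_sT_mf=2Q_{s/2}T_m(\partial_sQ_{s/2}f)$; bounding this by the $L^1$-norm of its Fourier transform and invoking $\|m\|_\infty\leq C_1$ together with the scaling (\ref{scaling_Ks}) gives its decay to zero as $s\to\infty$. That decay licenses rewriting the outer $s$-integral through the fundamental theorem of calculus and applying Cauchy--Schwartz to obtain the bound $(G_{T_mf}^\uparrow(x))^2\leq 4\int_0^\infty t^3\,(\partial_tQ_{t/2}T_m(\partial_tQ_{t/2}f)(x))^2\,dt$.

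Next, since $T_m$ is convolution with $\kappa$, I would insert the weights $(s^{2/\alpha}+|y|)^{\pm\alpha}$ inside the $y$-integral and apply Cauchy--Schwartz to split off the factor involving $\kappa$ from the factor involving $f$. The $\kappa$-factor $\int_\R(s^{2/\alpha}+|y|)^{2\alpha}(\partial_sQ_{s/2}\kappa(y))^2\,dy$ is bounded by $c\,C_1^2\,s^{2-2/\alpha}$ thanks to Theorem \ref{subintegral3}, while the remaining factor reassembles into $(G_{\lambda,f}^{*,\uparrow}(x))^2$ with $\lambda=2\alpha>1$, yielding the pointwise domination $G_{T_mf}^\uparrow(x)\leq c\,C_1\,G_{\lambda,f}^{*,\uparrow}(x)$. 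Chaining the Littlewood--Paley inequalities (\ref{g_ineq_1}) and (\ref{g_ineq_2}) then gives $\|T_mf\|_{L^p(\R)}\leq c\,C_1\,\|f\|_{L^p(\R)}$ for $p\geq 2$, which extends to all of $L^p(\R)$ by density of $\mathcal{C}_c(\R)$; the range $p\in(1,2)$ is handled by duality, passing to the adjoint $\tilde T_m$ with reflected kernel $\tilde\kappa(x)=\kappa(-x)$ and using H\"older's inequality, exactly as in the proof of Theorem \ref{main_thm}.

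The step I would expect to be the main obstacle is the weighted $L^2$-bound on the kernel, that is, controlling $\int_\R(s^{2/\alpha}+|y|)^{2\alpha}(\partial_sQ_{s/2}\kappa(y))^2\,dy$; in every theorem of this family this is where the hypotheses on the multiplier must be converted into quantitative decay of the kernel. For the present statement, however, that obstacle has already been cleared, since it is exactly the content of Theorem \ref{subintegral3}, whose proof in turn rests on the extended product rule for $\Lgen$ and the $L^2$-bound for $J$ from Lemma \ref{techLemma}. Thus no genuinely new difficulty remains, and the proof is a direct transcription of that of Theorem \ref{main_thm}.
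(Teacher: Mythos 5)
Your proposal is correct and is essentially the paper's own argument: the paper proves Theorem \ref{subintegral3} and then simply states that Theorem \ref{main_thm_3} ``follows from the lines of the proof of Theorem \ref{main_thm}'', which is exactly the substitution you carry out. Your observation that the hypotheses on $m$ enter the proof of Theorem \ref{main_thm} in only two places --- the bound $\|m\|_\infty\leq C_1$ for the decay of $Q_{s/2}T_m(\partial_sQ_{s/2}f)$ as $s\to\infty$, and the weighted $L^2$-estimate on $\partial_sQ_{s/2}\kappa$ now supplied by (\ref{norm_est_2}) --- is precisely why the transcription is legitimate.
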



In the classical theory, Mikhlin multipliers have various applications. We believe that the new generalized class of multipliers defined in this paper will lead a broader range of applications and even restudy of old results.




 \bigskip \smallskip

 \it

 \noindent
   Department of Mathematics \\ I\c{s}\i k University\\
AMF233, \\34980 \c{S}ile, \\
Istanbul, TURKEY  \hfill Received: February 3, 2017 \\[4pt]
e-mail:  deniz.karli@gmail.com (primary); 

 \hspace{0,56cm} deniz.karli@isikun.edu.tr (secondary)

\end{document}